\newcommand{\R}{\mathbb{R}}
\newenvironment{proof}{\noindent{\it Proof}\rm.}{\hfill $\Box$}
\newenvironment{proofof}[1]{\bigskip\noindent{\it Proof of~#1.}\rm}{\hfill $\Box$}
\theoremstyle{plain}
\newtheorem{theorem}{Theorem}[section]
\newtheorem{lemma}{Lemma} [section]
\newtheorem{corollary}[theorem]{Corollary}
\newtheorem{proposition}[theorem]{Proposition}
\theoremstyle{definition}
\newtheorem{remark}[theorem]{Remark}
\numberwithin{equation}{section}
\begin{document}
\begin{frontmatter}

\title{Eigenvalue inequalities for Klein-Gordon Operators}
\author{Evans M. Harrell II}
\ead{harrell@math.gatech.edu}
\address{School of Mathematics, Georgia Institute of
Technology, Atlanta, GA 30332-0160 U.S.A.}
\author{Selma Y{\i}ld{\i}r{\i}m Yolcu} \ead{selma@math.gatech.edu}
\address{School of Mathematics, Georgia Institute of
Technology, Atlanta, GA 30332-0160 U.S.A.}

\begin{abstract}

We consider the pseudodifferential operators $H_{m,\Omega}$
associated by the prescriptions of quantum mechanics to the
Klein-Gordon Hamiltonian $\sqrt{|{\bf P}|^2+m^2}$ when restricted to
a compact domain $\Omega$ in ${\mathbb R}^d$. When the mass $m$ is
$0$ the operator $H_{0,\Omega}$ coincides with the generator of the
Cauchy stochastic process with a killing condition on $\partial
\Omega$.  (The operator $H_{0,\Omega}$ is sometimes called the {\it
fractional Laplacian} with power $\frac{1}{2}$, cf. \cite{Gie}.) We
prove several universal inequalities for the eigenvalues $0 <
\beta_1 < \beta_2 \le \cdots$ of $H_{m,\Omega}$ and their means
$\overline{\beta_k} := \frac{1}{k} \sum_{\ell=1}^k{\beta_\ell}$.

Among the inequalities proved are:
\begin{equation}\nonumber
{\overline{\beta_k}} \ge {\rm cst.} \left(\frac{k}{|\Omega|}\right)^{1/d}
\end{equation}
for an explicit, optimal ``semiclassical'' constant, and, for any dimension $d \ge 2$
and any $k$:
\begin{equation}\nonumber
\beta_{k+1} \le  \frac{d+1}{d-1} \overline{\beta_k}.
\end{equation}
Furthermore, when $d \ge 2$ and $k \ge 2j$,
\begin{equation}\nonumber
\frac{\overline{\beta}_{k}}{\overline{\beta}_{j}} \leq
\frac{d}{2^{1/d}(d-1)}\left(\frac{k}{j}\right)^{\frac{1}{d}}.
\end{equation}

Finally, we present some analogous estimates allowing for an external potential energy field,
i.e, $H_{m,\Omega}+ V(\bf x)$, for
$V(\bf x)$ in certain function classes.

\end{abstract}

\begin{keyword} Fractional Laplacian, Weyl law,  Dirichlet problem,
Riesz means, universal bounds, Cauchy process, Dirac equation,
Klein-Gordon equation, semiclassical, relativistic particle.
 \MSC Primary 35P15 \sep Secondary 35S99.
\end{keyword}
\end{frontmatter}

\section{Introduction}\label{intro}

The quantum-mechanical operator corresponding to the Klein-Gordon
Hamiltonian is a first-order pseudodifferential operator used to
model relativistic particles in quantum mechanics. On unrestricted
space the part representing kinetic energy $\sqrt{|{\bf P}|^2+m^2}$
can be defined as the square root of $-\Delta + m^2$, where $m$ is a
nonnegative constant corresponding to the mass, in units where the
speed of light is set to $1$. We restrict it to a compact domain in
${\mathbb R}^d$ and designate the quantum version of $\sqrt{|{\bf
P}|^2+m^2}\Big|_{\Omega}$ as $H_{m,\Omega}$. (A full definition of
$H_{m,\Omega}$ is provided below.) The operator $H_{m,\Omega}$ is
positive definite with compact inverse and hence it has purely
discrete spectrum consisting of positive eigenvalues $0 < \beta_1 <
\beta_2 \le \dots$. When $m=0$ the operator $H_{0,\Omega}$ reduces
to the generator of the Cauchy stochastic process
\cite{Sato,BanKul}, and because
\begin{equation}\label{H0vsHm}
H_{0,\Omega} \le H_{m,\Omega} \le  H_{0,\Omega} + m,
\end{equation}
we shall sometimes be able to restrict to this case without of generality.

Our aim is to find analogues for $H_{m,\Omega}$ of some familiar
inequalities of a general nature that apply to the eigenvalues $0 <
\lambda_1 < \lambda_2 \le \cdots$ of the Dirichlet problem for the
Laplacian on a bounded domain $\Omega \in \R^d$. In some of these
the spectrum is constrained by the shape and size of $\Omega$; for
example the volume of $\Omega$ appears in both the Faber-Krahn lower
bound for $\lambda_1$ and in the Weyl estimate of $\lambda_k $ as $k
\rightarrow \infty$.  In addition, there are {\it universal bounds},
whereby  either $\lambda_k$ individually, or else some quantity
involving many eigenvalues such as an average, a gap, or a ratio, is
controlled by a different spectral quantity, independently of the
geometry of $\Omega$.  Various aspects of the well-developed
subjects of geometric and universal bounds are treated,  for
instance, in \cite{Ash,Ban,Bera,Cha,Henrot}.  One way to generate
geometric and universal bounds for the Laplacian is based on
identities for traces of commutators of operators
\cite{Har93,HarMic,HarStu,LevPar,AshHer}, and with the benefit of
hindsight these algebraic methods can be perceived implicitly in
most of the classic universal spectral bounds for Laplacians
\cite{PPW,HilPro,Yan}.  Moreover, comparable universal bounds have
been obtained with the same strategy for  Schr\"odinger operators on
Euclidean spaces \cite{HarStu}, and both Laplacians and
Schr\"odinger operators on embedded manifolds
\cite{Li,YanYau,HarMic,Mic,CY1,CY2,EHI,Har93,Har07}. In many cases
examples can be identified in which the inequalities are saturated.

The plan of attack is to use trace identities to derive universal
spectral bounds and geometric spectral bounds for $H_{m,\Omega}$.
The generator of the Cauchy process, corresponding to the case
$m=0$, is often referred to as the fractional Laplacian and
designated $\sqrt{-\Delta}$.  The latter is, unfortunately,
ambiguous notation, since this operator is distinct from the
operator $\sqrt{-\Delta_\Omega}$ as defined by the functional
calculus for the Dirichlet Laplacian $-\Delta_\Omega$, except when
$\Omega$ is all of ${\mathbb R}^d$.  For this reason we shall avoid
the ambiguous notation when speaking of compact $\Omega$. (For the
spectral theorem and the functional calculus, see, e.g.,
\cite{ReeSim}.)
Whereas several universal eigenvalue bounds, mostly of unknown or
indifferent sharpness, have been obtained for higher-order partial
differential operators such as the bilaplacian (e.g.,
\cite{LevPro,HarMic,CY3,WanXia,WuCao}), and for some first-order
Dirac  operators \cite{Che}, universal bounds for pseudodifferential
operators appear not to have been studied before.

In a final section we study interacting Klein-Gordon operators of the form
\begin{equation}\label{Ham}
H = H_{m,\Omega} + V({\bf x}),
\end{equation}
allowing an external force field.
An additional contemporary motivation for \eqref{Ham}
comes from nanophysics, because when a nonrelativistic
particle travels in a two-dimensional hexagonal structure like
carbon graphene, the effective Hamiltonian operator is relativistic in form,
albeit with a characteristic speed smaller than the speed of light \cite{Wal}.

Klein-Gordon operators can be conveniently defined using the Fourier
transform on the dense subspace of test functions
$C_c^{\infty}(\R^d)$. With the normalization
$$
\widehat{\varphi}({\bf \xi}) = {\mathcal F}\left[\varphi\right] :=
\frac{1}{(2 \pi)^{d/2}} \int_{\R^d}{\exp{(-i {\bf \xi} \cdot {\bf x})}
\varphi({\bf x}) d{\bf x}},
$$
the Laplacian is given by $- \Delta \varphi :=  {\mathcal F}^{-1}
|{\bf \xi}|^2 \widehat{\varphi}({\bf \xi})$, and therefore
\begin{equation} \label{sqrt}
\sqrt{- \Delta + m^2} \varphi :=  {\mathcal F}^{-1} \sqrt{|{\bf \xi}|^2 + m^2} \widehat{\varphi}({\bf \xi}).
\end{equation}

The semigroup generated on $L^2(\R^d)$ is known explicitly, so that,
for instance with $m=0$,

\begin{equation} \label{sgpRd}
\exp{(- \sqrt{-\Delta} t)}\left[\varphi\right]({\bf x}) = p_0(t, \cdot) * \varphi,
\end{equation}

where for $t > 0$  the transition density (= convolution kernel) is
\begin{equation} \label{td}
p_0(t, {\bf x}) := \frac{c_d t}{(t^2+|{\bf x}|^2)^{\frac{d+1}{2}}},
\end{equation}
with
$\displaystyle{c_d} := \frac{d!}{(4 \pi)^{d/2} \Gamma(1+d/2)}$.
(Cf. \cite{BanKul}. We note that $c_d$ is the same ``semiclassical''
constant that appears in the Weyl estimate for the eigenvalues of
the Laplacian.  It is given in \cite{BanKul} and some other sources
as $\pi^{-\frac{d+1}{2}} \Gamma\left(\frac{d+1}{2}\right)$, which is
equal to $c_d$ by an application of the duplication formula of the
gamma function.)

If $\Omega$ is a non-empty, bounded, open subset of $\R^d$, then we define
$H_{m,\Omega}$ as follows. Consider the quadratic form on
$C_c^{\infty}(\Omega)$ given by
$$\varphi \rightarrow \int_{\Omega}{\overline{\varphi}  \sqrt{- \Delta + m^2}\;\varphi}$$
(Here $ \sqrt{- \Delta + m^2}$ is calculated for $\R^d$.) Since this
quadratic form is positive and defined on a dense set, it extends to
a unique minimal positive operator (the Friedrichs extension) on
$L^2(\Omega)$, which we designate $H_{m,\Omega}$.
The semigroup
$\displaystyle{e^{-t H_{m,\Omega}}}$
has an integral kernel $p_{m,\Omega}(t,{\bf x},{\bf y})$,
the form of which is typically not known explicitly.

We remark that the Fourier transform can be more directly applied to
$H_{m,\Omega}$ than to the square root of the Dirichlet Laplacian
according to the functional calculus, which dominates it in the
following sense:

Suppose that
$\varphi \in C^{\infty}_{c}(\Omega)\subset
C^{\infty}_{c}({\mathbb R}^d)$. Then

\begin{eqnarray*}
\langle \varphi, H_{m,\Omega}^2\varphi\rangle=\|H_{m,\Omega}\varphi
\|^2&=&\int_{\Omega} \left|{\mathcal
F}^{-1}\left(\sqrt{|{\bf \xi}|^2+m^2}\hat{\varphi}\right)\right|^2\\
&=&\int_{{\mathbb R}^d} \left|\chi_{\Omega}{\mathcal
F}^{-1}\left(\sqrt{|{\bf \xi}|^2+m^2}\hat{\varphi}\right)\right|^2\\
&\leq&\int_{{\mathbb R}^d} \left|{\mathcal
F}^{-1}\left(\sqrt{|{\bf \xi}|^2+m^2}\hat{\varphi}\right)\right|^2\\
&=&\int_{{\mathbb R}^d} {\overline{\varphi}(-\Delta +m^2)\varphi}\\
&=&\int_{\Omega} {\overline{\varphi}(-\Delta +m^2)\varphi},\\
\end{eqnarray*}

because ${\rm supp}(\varphi) \in \Omega$ and $-\Delta$ is a local
operator. Therefore, if $\beta_k$ denotes the $k^{th}$ eigenvalue of
$H_{m,\Omega}$, and $\lambda_k$ is the $k^{th}$ eigenvalue of
$-\Delta$,

\begin{equation}\label{ibbl}\beta_k\leq \sqrt{\lambda_k + m^2}.\end{equation}

\section{Trace formulae and inequalities for spectra of $H_{m,\Omega}$}

In \cite{HarHer} universal bounds for spectra of Laplacians were
found as consequences of differential inequalities for Riesz means
defined on the sequence of eigenvalues. The strategy here is the
same, as adapted to the
eigenvalues $\beta_j,j = 1, \dots$ of the
first-order pseudodifferential operator
$H_{m,\Omega}$.
However, as the earlier article made heavy use of the fact that the Laplacian
is of second order and acts locally, neither of which circumstance
applies here, the results we
obtain here and the details of the argument are quite different.

An essential lemma is an adaptation of a result
of \cite{HarStu,HarStu2}.

\begin{lemma}\rm{(Harrell-Stubbe)} Let $H$ be a self-adjoint
operator on $L^2(\Omega)$, $\Omega \in \R^d$, with discrete spectrum
$\beta_1 \le \beta_2 \le \dots$.  Denoting the corresponding
normalized eigenfunctions $\{u_j\}$, assume that for a Cartesian
coordinate $x_{\alpha}$, the functions $x_{\alpha} u_j$ and
$x_{\alpha}^2 u_j$ are in the domain of  definition of $H$.  Then
\begin{equation}  \label{R1mod}
\sum_{j: \beta_j \le z}(z-\beta_j)\langle
u_j,\left[x_{\alpha},\left[H,x_{\alpha}\right]\right]u_j\rangle-2\|\left[H,x_{\alpha}\right]u_j\|^2
\le 0,
\end{equation}
and
\begin{equation}  \label{R2mod}
\sum_{j: \beta_j \le z}(z-\beta_j)^2\langle
u_j,\left[x_{\alpha},\left[H,x_{\alpha}\right]\right]u_j\rangle-2(z-\beta_j)\|\left[H,x_{\alpha}\right]u_j\|^2
\le 0.
\end{equation}

\end{lemma}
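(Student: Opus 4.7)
The proof reduces each inequality to a manifestly non-positive double sum via a commutator/trace-identity calculation. I would proceed in three steps.

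First, expand the double commutator as $[x_{\alpha},[H,x_{\alpha}]] = 2\,x_{\alpha} H x_{\alpha} - x_{\alpha}^2 H - H x_{\alpha}^2$, and insert the resolution of the identity $\sum_k |u_k\rangle\langle u_k|$ between $x_\alpha$ and $Hx_\alpha$, using $Hu_k=\beta_k u_k$. Writing $M_{jk} := |\langle u_k, x_\alpha u_j\rangle|^2$, which is symmetric in $(j,k)$, I obtain the sum rules
\begin{equation*}
\langle u_j, [x_{\alpha},[H,x_{\alpha}]] u_j\rangle = 2\sum_k (\beta_k-\beta_j)\, M_{jk}, \qquad \|[H,x_{\alpha}]u_j\|^2 = \sum_k (\beta_k-\beta_j)^2\, M_{jk}.
\end{equation*}
The hypothesis that $x_\alpha u_j$ and $x_\alpha^2 u_j$ lie in the domain of $H$ is precisely what legitimizes these bilinear-form computations and guarantees absolute convergence of the series.

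Next, substitute these expressions into \eqref{R1mod} and \eqref{R2mod}. After dividing by $2$, \eqref{R1mod} collapses to
\begin{equation*}
S_1(z) := \sum_{j:\beta_j\le z}\sum_k (\beta_k-\beta_j)(z-\beta_k)\, M_{jk} \le 0,
\end{equation*}
and \eqref{R2mod} collapses to
\begin{equation*}
S_2(z) := \sum_{j:\beta_j\le z}\sum_k (z-\beta_j)(\beta_k-\beta_j)(z-\beta_k)\, M_{jk} \le 0.
\end{equation*}

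Finally, split each inner $k$-sum according to whether $\beta_k\le z$ or $\beta_k > z$. In the ``cross'' block $\{(j,k):\beta_j\le z<\beta_k\}$, one has $(\beta_k-\beta_j)>0$ and $(z-\beta_k)<0$, so every summand is non-positive term by term in both $S_1$ and $S_2$. In the ``in-in'' block $\{(j,k):\beta_j,\beta_k\le z\}$, I would symmetrize by interchanging $j$ and $k$ and invoking $M_{jk}=M_{kj}$. A short calculation shows that the in-in part of $S_1$ equals $-\tfrac12 \sum (\beta_k-\beta_j)^2 M_{jk}\le 0$, while the summand of $S_2$ is antisymmetric under $j\leftrightarrow k$ so its in-in part cancels identically. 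Both inequalities follow.

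I expect the main obstacle to lie not in the algebra of this lemma itself, which is rigid, but in its later application to $H = H_{m,\Omega}$: the non-locality of the pseudodifferential operator makes the domain requirement on $x_\alpha u_j$ and $x_\alpha^2 u_j$ far from automatic, and addressing it will be the technical key to translating the abstract lemma into concrete eigenvalue bounds.
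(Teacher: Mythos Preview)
Your argument is correct and follows essentially the same route as the paper: both reduce the left sides of \eqref{R1mod} and \eqref{R2mod} to the double sums $\sum_{j,k}(\beta_k-\beta_j)(z-\beta_k)M_{jk}$ and $\sum_{j,k}(z-\beta_j)(\beta_k-\beta_j)(z-\beta_k)M_{jk}$ via eigenfunction expansion, then handle the $\beta_k>z$ terms by sign and the $\beta_j,\beta_k\le z$ terms by symmetrization (yielding $-\tfrac12\sum(\beta_k-\beta_j)^2 M_{jk}$ for the first and $0$ for the second). The only cosmetic difference is that the paper first packages the identity as $(z-\beta_j)\langle u_j,[x_\alpha,[H,x_\alpha]]u_j\rangle-2\|[H,x_\alpha]u_j\|^2 = 2\langle(z-H)x_\alpha u_j,(H-\beta_j)x_\alpha u_j\rangle$ before expanding, whereas you go straight to the matrix-element sum rules.
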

So that this article is self-contained, we provide a proof of the
lemma.\\

\begin{proof}
Elementary calculations show that, subject to the domain assumptions
made in the statement of the theorem,
$$
\left[H,x_{\alpha}\right]u_j = \left(H - \beta_j\right)x_{\alpha}u_j,
$$
and
$$
\langle{u_j,\left[x_{\alpha},\left[H,x_{\alpha}\right]\right]u_j}\rangle
= 2 \langle{x_{\alpha}u_j,\left(H - \beta_j\right)x_{\alpha}u_j}\rangle.
$$
These two identities can be combined and slightly rearranged to yield:
$$
(z-\beta_j)\langle
u_j,\left[x_{\alpha},\left[H,x_{\alpha}\right]\right]u_j\rangle-2\|\left[H,x_{\alpha}\right]u_j\|^2
$$
$$
\quad\quad\quad\quad\quad\quad
= 2 \langle{\left((z-\beta_j) - (H - \beta_j) \right) x_{\alpha}u_j, (H - \beta_j) x_{\alpha}u_j}\rangle
$$
\begin{equation}\label{HSId}
\quad
= 2 \langle{\left(z - H\right) x_{\alpha}u_j, (H - \beta_j) x_{\alpha}u_j}\rangle.
\end{equation}
Using the completeness of the eigenfunctions of $H$,
$$
(H - \beta_j) x_{\alpha}u_j = \sum_k{(\beta_k - \beta_j) \langle{x_{\alpha}u_j, u_k}\rangle u_k},
$$
so the right side of \eqref{HSId} can be rewritten as
$$
2 \sum_k{\left(z - \beta_k \right)  \langle{u_k, x_\alpha u_j}\rangle}
 \left(\beta_k - \beta_j\right) \langle{x_\alpha u_j, u_k}\rangle = 2 \sum_k{\left(z - \beta_k \right) \left(\beta_k - \beta_j\right)
|\langle{u_k, x_\alpha u_j}\rangle}|^2
$$
\begin{equation}\label{rtside}
\quad\quad\quad\quad\le 2 \sum_{k:\beta_k < z} {\left(z - \beta_k \right) \left(\beta_k
- \beta_j\right) |\langle{u_k, x_\alpha u_j}\rangle}|^2,
\end{equation}
provided that $\beta_j \le z$.  If we now sum \eqref{HSId} over $j$
with $\beta_j \le z$, i.e., the same values of $j$ as for $k$
in \eqref{rtside},
then
after symmetrizing in $j,k,$
$$
  \sum_{j: \beta_j \le
z}(z-\beta_j)\langle
u_j,\left[x_{\alpha},\left[H,x_{\alpha}\right]\right]u_j\rangle-2\|\left[H,x_{\alpha}\right]u_j\|^2
\quad\quad\quad \quad\quad\quad
$$
$$
\quad\quad\quad \quad\quad\quad \le
\sum_{j,k:\beta_k, \beta_j < z} {\left(\left(z -  \beta_k\right) - \left(z -  \beta_j\right) \right)  \left(\beta_k -
\beta_j\right) |\langle{u_k, x_\alpha u_j}\rangle|^2},
$$
which simplifies to
$$
-\sum_{j,k:\beta_k, \beta_j < z} { \left(\beta_k -
\beta_j\right)^2 |\langle{u_k, x_\alpha u_j}\rangle}|^2 \le 0,
$$
as claimed in \eqref{R1mod}.
In order to establish \eqref{R2mod}, multiply \eqref{rtside} by $(z-\beta_j)$ and then
sum on $j$ for $\beta_j < z$.  The summand on the right side is odd in the exchange of
$j$ and $k$, and thus the right side equates to $0$.
\end{proof}

Some consequences of more general forms of the lemma are worked out
in \cite{HarStu2}. Before deriving a differential inequality that
will be useful to control the spectrum, we first follow the strategy
of \cite{HarStu} to obtain a universal bound on $\beta_{n+1}$ in
terms of the statistical distribution of the lower eigenvalues.  For
this purpose we introduce notation for the normalized moments of the
eigenvalues:

{\bf Definition}. For a real number $r$ and an integer $k > 0$,
$\;\overline{\beta_k^r} := \frac{1}{k} \sum_{j=1}^k{\beta_j^r}$.
When $r=1$ we simply write $\overline{\beta_k} =
\overline{\beta_k^1}$.

\begin{theorem}\label{upperbd} If $d \ge 2$, then for each $k$,
the eigenvalues $\beta_k$ of $H_{m,\Omega}$ satisfy
\begin{equation}\label{ugly}
\beta_{k+1} \le \frac{1}{(d-1) \overline{\beta_k^{-1}}} \left(d +
\sqrt{d^2 - (d^2 - 1)\overline{\beta_k} \, \overline{\beta_k^{-1}}
}\right).
\end{equation}
\end{theorem}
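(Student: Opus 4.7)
The plan is to apply the Harrell-Stubbe inequality \eqref{R2mod} to $H = H_{m,\Omega}$ separately for each Cartesian coordinate $x_\alpha$, sum the resulting inequalities over $\alpha = 1, \dots, d$, and then set $z = \beta_{k+1}$ so that the sum on $j$ reduces to $j = 1, \dots, k$. With the abbreviations
$$T_1(u_j) := \sum_{\alpha=1}^d \langle u_j, [x_\alpha, [H_{m,\Omega}, x_\alpha]] u_j\rangle, \qquad T_2(u_j) := \sum_{\alpha=1}^d \|[H_{m,\Omega}, x_\alpha] u_j\|^2,$$
this step produces
$$\sum_{j=1}^k \bigl[(\beta_{k+1}-\beta_j)^2 T_1(u_j) - 2(\beta_{k+1}-\beta_j) T_2(u_j)\bigr] \le 0.$$
I expect to obtain the theorem by proving the universal estimates $T_1(u_j) \ge (d-1)/\beta_j$ and $T_2(u_j) \le 1$ and then solving a quadratic inequality in $\beta_{k+1}$.

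I will evaluate $T_1$ and $T_2$ by extending each normalized eigenfunction $u_j$ by zero outside $\Omega$ and working on $\R^d$, where $\sqrt{-\Delta+m^2}$ is the Fourier multiplier with symbol $h(\xi) := \sqrt{|\xi|^2+m^2}$. Consequently $[\sqrt{-\Delta+m^2}, x_\alpha]$ and $[x_\alpha,[\sqrt{-\Delta+m^2}, x_\alpha]]$ are multipliers with symbols $-i\partial_{\xi_\alpha} h$ and $\partial_{\xi_\alpha}^2 h$, respectively. Since $H_{m,\Omega}$ acts on its domain as the restriction to $\Omega$ of $\sqrt{-\Delta+m^2}$, a short algebraic check shows that $[x_\alpha,[H_{m,\Omega},x_\alpha]] u_j$ is the restriction to $\Omega$ of the $\R^d$ double commutator applied to $u_j$ and likewise for the single commutator. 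Combined with Plancherel and the pointwise identities $\Delta_\xi h = (d-1)/h + m^2/h^3$ and $|\nabla_\xi h|^2 = |\xi|^2/h^2 \le 1$, this will give
$$T_1(u_j) = \int_{\R^d} |\hat u_j(\xi)|^2 \left(\frac{d-1}{h(\xi)} + \frac{m^2}{h(\xi)^3}\right) d\xi, \qquad T_2(u_j) \le \int_{\R^d} |\hat u_j(\xi)|^2 \frac{|\xi|^2}{h(\xi)^2} d\xi \le 1.$$

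Next I will invoke Jensen's inequality to bound $T_1$ from below. Because $\|u_j\| = 1$, the measure $|\hat u_j|^2 d\xi$ is a probability measure on $\R^d$, and $\int h |\hat u_j|^2 d\xi = \langle u_j, H_{m,\Omega} u_j\rangle = \beta_j$. Convexity of $y \mapsto 1/y$ on $(0,\infty)$ then yields $\int |\hat u_j|^2/h\, d\xi \ge 1/\beta_j$, so after discarding the nonnegative $m^2/h^3$ contribution we obtain $T_1(u_j) \ge (d-1)/\beta_j$. Plugging the bounds $T_1 \ge (d-1)/\beta_j$ and $T_2 \le 1$ into the displayed inequality and dividing by $k$ reduces the problem to
$$(d-1)\overline{\beta_k^{-1}}\, \beta_{k+1}^2 - 2d\, \beta_{k+1} + (d+1)\overline{\beta_k} \le 0,$$
whose larger root is precisely the right-hand side of \eqref{ugly}.

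The main obstacle is the transition between $H_{m,\Omega}$ and the $\R^d$ multiplier $\sqrt{-\Delta+m^2}$: one must verify that $x_\alpha u_j$ and $x_\alpha^2 u_j$ lie in the operator domain of $H_{m,\Omega}$ (so that the lemma's hypotheses hold) and that the commutator identities survive extension by zero. Boundedness of $\Omega$ makes $x_\alpha$ and $x_\alpha^2$ bounded multipliers on $L^2(\Omega)$, which is helpful, and a density argument via $C_c^\infty(\Omega)$ in the form domain, paralleling the quadratic-form computation displayed in the introduction just before \eqref{ibbl}, should rigorously identify the Fourier-side quantities.
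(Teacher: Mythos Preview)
Your approach is essentially identical to the paper's: apply the Harrell--Stubbe inequality \eqref{R2mod} to $H_{m,\Omega}$, compute the commutators as Fourier multipliers to obtain $T_2(u_j)\le 1$ and $T_1(u_j)\ge (d-1)\int |\hat u_j|^2/h(\xi)\,d\xi$, reduce to the quadratic \eqref{poly}, and read off the larger root with $z=\beta_{k+1}$. The one cosmetic difference is that where you invoke Jensen's inequality (equivalently Cauchy--Schwarz) to pass from $\int |\hat u_j|^2/h\,d\xi$ to $1/\beta_j$ using $\int h|\hat u_j|^2\,d\xi=\beta_j$, the paper writes this quantity as $\langle u_j, H_{m,\Omega}^{-1}u_j\rangle$ and then uses the eigenvalue equation $H_{m,\Omega}^{-1}u_j=u_j/\beta_j$; your explicit Jensen step is arguably the cleaner justification of that passage.
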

Before giving the proof we note two slightly weaker but more
appealing variants of \eqref{ugly} using the
Cauchy-Schwarz inequality, $1 \le \overline{\beta_k} \,
\overline{\beta_k^{-1}}$, with the aid of which the universal bound
simplifies to

\begin{equation}\label{simpleupper}
\beta_{k+1} \le \frac{d+1}{(d-1) \overline{\beta_k^{-1}}} \le
\frac{d+1}{d-1} \overline{\beta_k}.
\end{equation}

In particular,
\begin{equation}\label{funrat}
\frac{\beta_2}{\beta_1}  \le  \frac{d+1}{d-1},
\end{equation}
regardless of any property of the domain other than compactness.

In this connection, recall that
R. Ba\~{n}uelos and T. Kulczycki have proved in
\cite{BanKul2} that the fundamental gap of the Cauchy process is
controlled by the inradius in the case of a bounded convex domain
$\Omega$ of inradius $\rm{Inr(\Omega)}$,{\it viz.}, for $m=0$,
$$\beta_2-\beta_1\leq \frac{\sqrt{\lambda_2}-(1/2)\sqrt{\lambda_1}}{\rm Inr(\Omega)}.$$
where $\lambda_1$ and $\lambda_2$ are the first and second
eigenvalues for the Dirichlet Laplacian for the unit ball, $B_1$ in $\mathbb{R}^d$.
(Recall that the inradius ${\rm Inr(\Omega)}$ of a
region $\Omega$ is defined by
$${\rm Inr(\Omega)}=\sup\{d({\bf x}):  {\bf x} \in \Omega\},$$
where $d({\bf x})=\min\{|{\bf x}-{\bf y}|:{\bf y}\notin\Omega\}$ {\rm \cite{Dav}}.)

Since a ratio bound like \eqref{funrat} is algebraically equivalent to a gap
bound, \eqref{funrat} provides an independent upper bound on the
gap $\beta_2-\beta_1$. Continuing to set $m=0$, \eqref{ibbl}
and \eqref{funrat} in the form $\beta_2-\beta_1\leq
\frac{2}{d-1} \beta_1$ imply:

\begin{corollary}\label{corbl} If $\beta_1^*$ and $\lambda_1^*$ denote the fundamental
eigenvalues of $H_{0,\Omega}$ and $-\Delta$, respectively, on
the unit ball of ${\mathbb R}^d$, then
\begin{equation}\label{iwinr}
\beta_2-\beta_1\leq \left(\frac{2}{d-1}\right)\frac{\beta_1^*}{\rm
Inr(\Omega)}\leq
\left(\frac{2}{d-1}\right)\frac{\sqrt{\lambda_1^*}}{{\rm
Inr(\Omega)}}.
\end{equation}
\end{corollary}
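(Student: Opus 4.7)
The plan is to chain together three ingredients: the ratio bound \eqref{funrat}, an inradius estimate for the ground-state eigenvalue $\beta_1(\Omega)$, and the pointwise comparison \eqref{ibbl}. The sentence preceding the statement already rewrites \eqref{funrat} algebraically as $\beta_2-\beta_1 \le \tfrac{2}{d-1}\beta_1$, so this first step is trivial. The work lies in bounding $\beta_1(\Omega)$ by $\beta_1^*/{\rm Inr}(\Omega)$.

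The main substantive step is to prove
$$
\beta_1(\Omega) \leq \frac{\beta_1^*}{{\rm Inr}(\Omega)}.
$$
I would establish this by combining domain monotonicity with a scaling identity. Fix $\varepsilon > 0$ and let $B \subset \Omega$ be an inscribed open ball of radius $r := {\rm Inr}(\Omega) - \varepsilon$ (such a ball exists by the definition of the inradius). Because $H_{0,\cdot}$ is defined via the Friedrichs extension of $\varphi \mapsto \int \overline\varphi\,\sqrt{-\Delta}\,\varphi$ with $\sqrt{-\Delta}$ computed on all of $\mathbb{R}^d$, zero-extension embeds $C_c^\infty(B)$ isometrically (at the level of the form) into $C_c^\infty(\Omega)$, and the min-max principle gives $\beta_1(\Omega) \le \beta_1(B)$. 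Since the symbol $|\xi|$ is homogeneous of degree one, the rescaling $v(x) := u(rx)$ sends eigenfunctions of $H_{0,B}$ to eigenfunctions of $H_{0,B_1}$ with eigenvalues multiplied by $r$; thus $\beta_1(B) = \beta_1^*/r$. Letting $\varepsilon \to 0$ produces the claimed estimate.

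Inserting this bound into $\beta_2 - \beta_1 \le \tfrac{2}{d-1}\beta_1$ yields the first inequality of \eqref{iwinr}. The second inequality follows from \eqref{ibbl} applied with $m=0$ and $k=1$ on the unit ball, which gives $\beta_1^* \le \sqrt{\lambda_1^*}$; dividing by ${\rm Inr}(\Omega)$ completes the argument.

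The only step that deserves real care is domain monotonicity for the nonlocal operator $H_{0,\cdot}$. This property is not automatic for every natural realization of $\sqrt{-\Delta}$ on a domain (in particular, it would require a separate argument for the functional-calculus version $\sqrt{-\Delta_\Omega}$), but for the paper's definition it is immediate: the Friedrichs form is the restriction of a quadratic form defined on all of $\mathbb{R}^d$, so extending a $C_c^\infty(B)$ function by zero to $\Omega$ trivially exhibits it as an element of $C_c^\infty(\Omega)$ at unchanged form value. Once this point is noted, all remaining steps are routine algebra and Fourier scaling.
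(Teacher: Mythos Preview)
Your proof is correct and follows essentially the same route as the paper: domain monotonicity (justified via the Friedrichs form being the restriction of a form on $\mathbb{R}^d$) plus degree-one scaling to bound $\beta_1(\Omega)$ by $\beta_1^*/{\rm Inr}(\Omega)$, then \eqref{funrat} and \eqref{ibbl}. Your treatment is in fact more careful than the paper's, which omits the $\varepsilon$ step and the explicit remark that monotonicity is not automatic for every realization of $\sqrt{-\Delta}$ on a domain.
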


\begin{proofof}{Corollary~\ref{corbl}} Since $H_{0,\Omega}$ is defined by closure from
a core of functions in $C_c^{\infty}$, its fundamental eigenvalue
satisfies the principle of domain monotonicity. That is, if
$\Omega_1\supset\Omega_2$, then
$\beta_1(\Omega_1)\leq\beta_1(\Omega_2)$. In particular, if $\Omega$
is a ball of radius $r$, then
$\displaystyle{\beta_1(\Omega)\leq\frac{\beta_1^*}{r}}$, which is
the fundamental eigenvalue of
the unit ball
$B_1$ by scaling. The first inequality
follows from \eqref{funrat}, and the second one by \eqref{ibbl}
\end{proofof}

\begin{proofof}{Theorem~\ref{upperbd}}
We make the special choice $H = H_{m,\Omega}$ and calculate
the first and second commutators with the aid of the Fourier
transform:

Writing $H_{m,\Omega}=\chi_\Omega {\mathcal F}^{-1}\sqrt{|{\bf
\xi}|^2+m^2}{\mathcal F}$,
\begin{eqnarray}
\left[H_{m,\Omega},x_{\alpha}\right]\varphi
&=& (H_{m,\Omega}\;x_{\alpha}-x_{\alpha}H_{m,\Omega})\varphi\nonumber\\
&=& \chi_\Omega{\mathcal F}^{-1}\sqrt{|{\bf \xi}|^2+m^2}{\mathcal
F}[x_{\alpha}\varphi]-\chi_\Omega x_{\alpha}
{\mathcal F}^{-1}[\sqrt{|{\bf \xi}|^2+m^2}\hat{\varphi}]\nonumber\\
&=&
\chi_\Omega {\mathcal F}^{-1} \left[\sqrt{|{\bf \xi}|^2+m^2}\frac{\partial\hat{\varphi}}{\partial
\xi_{\alpha}}-\frac{\partial}{\partial
\xi_{\alpha}}(\sqrt{|{\bf \xi}|^2+m^2}\hat\varphi)\right]\nonumber\\
&=& -i \chi_\Omega {\mathcal F}^{-1} \frac{\xi_{\alpha}}{\sqrt{|{\bf \xi}|^2+m^2}}
\hat{\varphi}.\label{fcomm}
\end{eqnarray}
Similarly,
\begin{equation}[x_{\alpha},[H_{m,\Omega},x_{\alpha}]]\varphi
=\chi_\Omega {\mathcal F}^{-1}\left[\left(\frac{1}{\sqrt{|{\bf
\xi}|^2+m^2}}- \frac{{\xi_{\alpha}}^2}{(|{\bf
\xi}|^2+m^2)^{3/2}}\right)
\hat{\varphi}\right].\label{scomm}\end{equation} Due to
\eqref{fcomm} and \eqref{scomm}, there are simplifications when we
sum over $\alpha$:
$$
\sum_{\alpha=1}^{d}{\|\left[H_{m,\Omega},x_{\alpha}\right]\varphi\|^2}
\leq \left\langle{\hat{\varphi}, \frac{|{\bf \xi}|^2}{|{\bf
\xi}|^2+m^2} \hat{\varphi}}\right\rangle \le 1,
$$
and
$$
\sum_{\alpha=1}^{d}{\left(\frac{1}{\sqrt{|{\bf \xi}|^2+m^2}}-
\frac{{\xi_{\alpha}}^2}{(|{\bf \xi}|^2+m^2)^{3/2}}\right)}
=\frac{(d-1)|{\bf \xi}|^2+ d \, m^2 }{(|{\bf \xi}|^2+m^2)^{3/2}}
$$
$$\quad\quad\quad\quad\quad\quad\quad\quad\quad\quad\quad\quad\quad
\ge \frac{d-1}{\sqrt{|{\bf \xi}|^2+m^2}}.
$$
In consequence, \eqref{R2mod} implies that
\begin{equation}\label{UvsR1}
(d-1)\sum_{j=1}^n(z-\beta_j)^2\langle u_j,
H_{m,\Omega}^{-1}u_j\rangle-2\sum_{j}(z-\beta_j) \le 0,
\end{equation}
provided $z \in [\beta_n, \beta_{n+1}]$.
Because
$$
H_{m,\Omega}^{-1}u_j = \frac{1}{\beta_j} u_j,
$$
and
$$
(z-\beta_j) = -\frac{(z-\beta_j)(z-\beta_j - z)}{\beta_j},
$$
Eq. \eqref{UvsR1} can be rewritten as
\begin{equation}\label{moddiffineq}
(d+1)\sum_{j=1}^n\frac{(z-\beta_j)^2}{\beta_j}-2z\sum_{j=1}^n
\frac{(z-\beta_j)}{\beta_j}\leq 0,
\end{equation}
or, equivalently,
\begin{equation}\label{poly}
(d-1) \overline{\beta_n^{-1}} z^2 - 2 d z + (d+1) \overline{\beta_n} \leq 0.
\end{equation}
Setting $z=\beta_{n+1}$, we see that $\beta_{n+1}$ must be less
than the larger root of \eqref{poly}, which is the conclusion of the theorem.
\end{proofof}

For future purposes we note that this theorem extends with small modifications
to semirelativistic Hamiltonians of the form $H_{m,\Omega} + V({\bf x})$.
More specifically,
\eqref{UvsR1} is valid when $\{u_k\}$ and $\{\beta_k\}$ are the
eigenfunctions and eigenvalues
of $H_{m,\Omega} + V({\bf x})$.

We next apply similar reasoning to a function related to Riesz
means. With $a_+ := max(0, a)$, let
\begin{equation}\label{Udef}
U(z):=\sum_{k}\frac{(z-\beta_k)_+^2}{\beta_k},
\end{equation}
where $z$ is a real variable. Note that if $z \in
[\beta_j,\beta_{j+1}]$, then
\begin{equation}\label{Uexpl}
\frac{U(z)}{j} = \overline{\beta_j^{-1}} z^2 -   2 z +
\overline{\beta_j}.
\end{equation}

\begin{theorem}\label{uindf} The function
$\displaystyle{z^{-(d+1)}U(z)}$ is nondecreasing in the variable
$z$. Moreover, for $d \ge 2$ and any $j \ge 1$, the  ``Riesz mean''
$R_1(z):=\sum_{k}(z-\beta_k)_+$ satisfies
\begin{equation}\label{r1}
R_1(z)\geq
\left(\frac{2j(d-1)^d}{(d+1)^{d+1}\overline{\beta_j}^{\;d}}
\right)z^{d+1}
\end{equation}
for all $\displaystyle{z\geq
\left(\frac{d+1}{d-1}\right)\overline{\beta_j}}$.
\end{theorem}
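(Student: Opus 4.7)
The plan is to extract the first assertion directly from the differential inequality \eqref{moddiffineq}, and then to derive the $R_1$ lower bound from two ingredients: a short algebraic identity linking $U$ to $R_1$, and a concrete lower bound for $U$ at the critical point $z_{0} := \tfrac{d+1}{d-1}\overline{\beta_{j}}$.

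First, I would observe that $U$ is absolutely continuous on $[0,\infty)$ and, off the set of eigenvalues,
$$
U'(z) = 2\sum_{k}\frac{(z-\beta_{k})_{+}}{\beta_{k}}.
$$
On each interval $[\beta_{n},\beta_{n+1}]$ the ramp functions collapse to $z-\beta_{k}$ for $k\le n$, so \eqref{moddiffineq} reads exactly $z U'(z) \ge (d+1)\,U(z)$. Dividing by $z^{d+2}$ gives $\tfrac{d}{dz}\bigl(z^{-(d+1)}U(z)\bigr)\ge 0$ almost everywhere, and the absolute continuity of $U$ then yields the claimed monotonicity.

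Next comes the key algebraic identity. Multiplying $U'$ by $z$ and splitting $z = (z-\beta_{k}) + \beta_{k}$ inside each summand gives
$$
z\,U'(z) \;=\; 2\sum_{k}\frac{(z-\beta_{k})_{+}^{2}}{\beta_{k}} \;+\; 2\sum_{k}(z-\beta_{k})_{+} \;=\; 2\,U(z) + 2\,R_{1}(z).
$$
Combined with the monotonicity $zU'(z)\ge (d+1)U(z)$, this is equivalent to
$$
R_{1}(z) \;\ge\; \frac{d-1}{2}\,U(z) \qquad \text{for every } z\ge 0.
$$

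Finally, I would estimate $U(z_{0})$ from below. By Theorem~\ref{upperbd} we have $\beta_{j+1}\le z_{0}$, so the first $j$ terms of the sum defining $U(z_{0})$ are genuine and nonnegative; by \eqref{Uexpl} (applied after discarding all summands beyond the $j$-th) and the Cauchy--Schwarz/AM--HM inequality $\overline{\beta_{j}^{-1}}\ge 1/\overline{\beta_{j}}$,
$$
U(z_{0}) \;\ge\; j\Bigl(\overline{\beta_{j}^{-1}}\,z_{0}^{2} - 2 z_{0} + \overline{\beta_{j}}\Bigr) \;\ge\; \frac{j}{\overline{\beta_{j}}}\bigl(z_{0}-\overline{\beta_{j}}\bigr)^{2} \;=\; \frac{4 j\,\overline{\beta_{j}}}{(d-1)^{2}}.
$$
Then for $z\ge z_{0}$ the monotonicity of $z^{-(d+1)}U(z)$ gives $U(z)\ge (z/z_{0})^{d+1}U(z_{0})$, and the relation $R_{1}(z)\ge\tfrac{d-1}{2}U(z)$ together with the explicit value $z_{0}^{d+1} = \bigl(\tfrac{d+1}{d-1}\bigr)^{d+1}\overline{\beta_{j}}^{\,d+1}$ yields exactly \eqref{r1} after routine simplification.

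The only mildly delicate step is the passage from the pointwise differential inequality to the global monotonicity of $z^{-(d+1)}U(z)$, which requires noting that $U$ is $C^{1}$ away from the discrete spectrum and absolutely continuous everywhere so that integration is legitimate across the jump points of $N(z)$; the rest is bookkeeping. The conceptual core of the argument is the identity $zU' = 2U + 2R_{1}$, which efficiently converts the ``Riesz-mean'' estimate into a bound on $R_{1}$ itself.
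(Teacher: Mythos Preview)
Your proof is correct and takes essentially the same approach as the paper: both derive the monotonicity of $z^{-(d+1)}U(z)$ from \eqref{moddiffineq}, establish $R_1 \ge \tfrac{d-1}{2}U$ (the paper via \eqref{UvsR1}, you via the equivalent identity $zU' = 2U + 2R_1$), bound $U$ from below at $z_0 = \tfrac{d+1}{d-1}\overline{\beta_j}$ using Cauchy--Schwarz, and then combine with the monotonicity. The only cosmetic difference is that the paper presents this $z_0$ as the optimizer of $z_{j*}^{-(d+1)}(z_{j*}-\overline{\beta_j})^2$, whereas you fix it directly and invoke Theorem~\ref{upperbd} to ensure $z_0 \ge \beta_j$.
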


\begin{proof}
In notation that suppresses $n$, Eq. \eqref{moddiffineq} can be written
\begin{equation}
(d+1)\sum_{k}\frac{(z-\beta_k)_+^2}{\beta_k}-2z\sum_{k}
\frac{(z-\beta_k)_+}{\beta_k}\leq 0,
\end{equation}
which for the function $U$ reads
\begin{equation} (d+1)U(z)-zU^{'}(z) \leq 0,\\ \nonumber
\end{equation}
or, equivalently,
\begin{equation}
\frac{d}{dz}\left\{\frac{U(z)}{z^{d+1}} \right\} \geq 0,
\end{equation}
proving the claim about $U$.

Eq. \eqref{UvsR1} tells us that
\begin{equation}\label{ibru}
R_1(z)\geq \frac{d-1}{2}\; U(z).
\end{equation}

Since  $\displaystyle{\frac{U(z)}{z^{d+1}}}$ is nondecreasing, when
$z\geq z_{j*}\geq\beta_j$,
\begin{equation}\label{unondec}
U(z)\geq\left( \frac{z}{z_{j*}}\right)^{d+1}U(z_{j*}).\end{equation}

From \eqref{Uexpl} with the Cauchy-Schwarz inequality we get
\begin{equation}\label{Ulower}
\frac{U(z)}{j} \ge \frac{1}{\overline{\beta_j}}(z -
\overline{\beta_j})^2,
\end{equation}
so that with \eqref{ibru} and \eqref{unondec}
we obtain
\begin{equation}\label{r1geqzp}
R_1(z)\geq
\frac{(d-1)j}{2\overline{\beta_j}}\left(\frac{z}{z_{j*}}\right)^{d+1}
\left(z_{j*} - \overline{\beta_j}\right)^2.
\end{equation}
We now choose an optimized value of $z_{j*}$ to maximize the
coefficient of $z^{d+1}$, {\it viz}.,
$\displaystyle{z_{j*}=\frac{d+1}{d-1}\overline{\beta_j}}$.
Substituting this into (\ref{r1geqzp}), we get \eqref{r1}, as
claimed.
\end{proof}

The Legendre transform of $R_1(z)$ is a straightforward calculation,
to be found explicitly for example in \cite{HarHer,LapWeidl}. The
result for $k-1<w<k$ is
\begin{equation}
R^*_1(w) =(w-[w])\beta_{[w]+1}+[w]\overline{\beta_{[w]}}\label{ltr},
\end{equation}
where $[w]$ denotes the greatest integer $\le w$, and when $w$ takes
an integer value $k$ from below, $R_1^*(k)=k\overline\beta_k$.

With the Legendre transform of the right side of (\ref{r1}),
we get
\begin{equation}\label{ltap}
k\overline{\beta}_{k} \leq
\frac{d~\overline{\beta}_j}{2^{1/d}\,j^{1/d}(d-1)}k^{\frac{d+1}{d}}.
\end{equation}
This leads us to the following upper bound for ratios of averages of
eigenvalues of $H_{m,\Omega}$:

\begin{corollary}  For $k>2j$, Eq. (\ref{ltap}) implies
\begin{equation}
\frac{\overline{\beta}_{k}}{\overline{\beta}_{j}} \leq
\frac{d}{2^{1/d}(d-1)}\left(\frac{k}{j}\right)^{\frac{1}{d}}\label{akp1b}.
\end{equation}
\end{corollary}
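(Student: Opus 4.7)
The plan is to derive (\ref{akp1b}) from (\ref{ltap}) by direct algebraic rearrangement. Since both $k$ and $\overline{\beta}_j$ are strictly positive, dividing both sides of (\ref{ltap}) by $k\overline{\beta}_j$ yields immediately
$$\frac{\overline{\beta}_k}{\overline{\beta}_j} \leq \frac{d}{2^{1/d}(d-1)}\left(\frac{k}{j}\right)^{1/d},$$
which is precisely the claim. So the bulk of the ``proof'' is a one-line manipulation.

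The only subtlety is to account for the hypothesis $k > 2j$, which does not come from the division step but from the range of validity of (\ref{ltap}). Inequality (\ref{ltap}) is obtained by Legendre-transforming the lower bound $R_1(z) \geq C z^{d+1}$ of (\ref{r1}), where $C = \frac{2j(d-1)^d}{(d+1)^{d+1}\overline{\beta}_j^d}$, and this lower bound is only available for $z \geq z_0 := \tfrac{d+1}{d-1}\overline{\beta}_j$. When one takes the Legendre transform at the conjugate integer value $w = k$, the extremizing $z$ satisfies $C(d+1)z^d = k$, which works out to $z_\star = \frac{(d+1)\overline{\beta}_j}{(2j)^{1/d}(d-1)}\, k^{1/d}$. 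Imposing $z_\star \geq z_0$, the factors $(d+1)\overline{\beta}_j/(d-1)$ cancel, leaving $k \geq 2j$; this is exactly the threshold in the hypothesis.

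The main (and essentially only) obstacle is the bookkeeping of constants needed to recognize where the $k > 2j$ condition emerges. No new idea beyond Theorem \ref{uindf} and the standard identity (\ref{ltr}) for the Legendre transform of $R_1$ is required; one just has to verify that the extremizing point for the polynomial bound lies in the regime where that bound is applicable, and record the resulting restriction.
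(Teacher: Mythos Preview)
Your proposal is correct and follows essentially the same route as the paper: the inequality (\ref{akp1b}) is obtained from (\ref{ltap}) by dividing through by $k\overline{\beta}_j$, and the restriction $k>2j$ is justified exactly as in the paper's subsequent Remark, by locating the extremizer $z_\star$ of the Legendre transform of the right side of (\ref{r1}) and requiring $z_\star \ge \tfrac{d+1}{d-1}\overline{\beta}_j$. Your computation of $z_\star$ and of the resulting threshold matches the paper's formula $w = 2j\bigl((d-1)z_*/((d+1)\overline{\beta}_j)\bigr)^d$.
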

\begin{remark}
The reason for the restriction on $k,j$ is that in
Theorem \ref{uindf}, we assumed that  $\displaystyle{z\geq
\left(\frac{d+1}{d-1}\right)\overline{\beta_j}}$. Since there is a
monotonic relationship between $w$ and the maximizing value of $z_*$
in the calculation of the Legendre transform of the right side of
(\ref{r1}) , we get
\begin{equation}
w=2 j \left(\frac{(d-1) z_*}{
 \left(d+1\right)\overline{\beta_j}}\right)^d.
\end{equation}
Thus the inequality is valid under the assumption that
$\displaystyle{k> w \geq 2j}$.

\end{remark}

\section{Weyl asymptotics and semiclassical bounds for $H_{m,\Omega}$}

In this section we consider the eigenvalues $\beta_k$ of
$H_{m,\Omega}$ as $k \rightarrow \infty$. In view of the elementary
inequalities \eqref{H0vsHm}, and the fact that
$\displaystyle{\lim_{|\xi|\to
\infty}\frac{\sqrt{|\xi|^2+m^2}}{|\xi|}=1}$, it suffices to consider
the case $m=0$.\\

\noindent We begin with the analogue of the Weyl formula for the
Laplacian, adapting one of the standard proofs of the latter, which
relies on an estimate of the partition function
$\displaystyle{Z(t):=\sum e^{-\beta_j t}}$ for $t>0$. Recall that
the function $Z(t)$ can be written as
\begin{equation}\label{ZN}
Z(t)=\int e^{-\beta t}dN(\beta),
\end{equation}
where $\displaystyle{N(\beta):=\sum_{\beta_j\leq \beta} 1}~$ is the
usual counting function.  Another standard formula for the partition
function is
\begin{equation}\label{Ztrace}
Z(t) = \int_{\Omega}{p_{\Omega}({\mathbf x},{\mathbf x},t) d{\mathbf
x}}.
\end{equation}
If we accept that $H_{m,\Omega}$ is well approximated by
$\sqrt{-\Delta_{\Omega}}$ in the ``semiclassical limit,'' then the analogue
for $N(\beta)$ of the Weyl asymptotic formula for the Laplacian
should be identical to the usual Weyl formula, with the
identification of $\beta_k$ with $\sqrt{\lambda_k}$. This intuition
is confirmed by the following:

\begin{proposition}{\label{asym}} As $\beta \rightarrow \infty$,
\begin{equation}\label{WeylN}
N(\beta)\sim   \frac{|\Omega|}{(4 \pi)^{d/2}\Gamma(1+d/2)} \beta^d.
\end{equation}
\end{proposition}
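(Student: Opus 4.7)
The plan is to establish the counting function asymptotics via Karamata's Tauberian theorem applied to the partition function $Z(t) = \sum_j e^{-\beta_j t}$.

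First, I would reduce to the case $m=0$ using the operator inequalities \eqref{H0vsHm}. By the min-max principle these imply $\beta_j^{(0)} \le \beta_j^{(m)} \le \beta_j^{(0)} + m$ for every $j$, where the superscript denotes the mass. Hence $N_m(\beta) \le N_0(\beta) \le N_m(\beta+m)$, and since $m$ is a fixed constant while $\beta \to \infty$, the Weyl law for $H_{m,\Omega}$ follows from the Weyl law for $H_{0,\Omega}$ by squeezing.

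Next, I would prove that the partition function satisfies $Z(t) \sim c_d |\Omega|/t^d$ as $t \to 0^+$, with $c_d$ the constant from \eqref{td}. The upper bound comes from domain monotonicity for the Cauchy semigroup: since $H_{0,\Omega}$ is the generator of the Cauchy process killed on exiting $\Omega$, the probabilistic representation gives $p_{0,\Omega}(t,\mathbf{x},\mathbf{x}) \le p_0(t,\mathbf{0}) = c_d/t^d$, and hence $Z(t) \le c_d |\Omega|/t^d$ by \eqref{Ztrace}. For the matching lower bound one compares $p_{0,\Omega}$ with the free kernel on any compact interior subset $K \subset \Omega$ with $d(K,\partial\Omega) \ge \epsilon > 0$, using a Feynman--Kac type estimate
\begin{equation*}
p_{0,\Omega}(t,\mathbf{x},\mathbf{x}) \ge p_0(t,\mathbf{0}) - \sup_{\mathbf{y}} p_0(t,\mathbf{y} - \mathbf{x}) \, \mathbb{P}_{\mathbf{x}}(\tau_\Omega \le t),
\end{equation*}
where $\tau_\Omega$ is the exit time. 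Integrating over $K$ and then letting $|K| \nearrow |\Omega|$ after $t \to 0^+$ yields $\liminf Z(t)\, t^d \ge c_d |\Omega|$.

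Finally, since $Z(t) = \int_0^\infty e^{-\beta t}\, dN(\beta)$ by \eqref{ZN} and $N$ is nondecreasing, Karamata's Tauberian theorem says that
\begin{equation*}
Z(t) \sim \frac{c_d |\Omega|}{t^d} \text{ as } t \to 0^+ \iff N(\beta) \sim \frac{c_d |\Omega|}{\Gamma(d+1)}\, \beta^d \text{ as } \beta \to \infty.
\end{equation*}
Substituting the explicit value $c_d = d!/[(4\pi)^{d/2}\Gamma(1+d/2)]$ and using $\Gamma(d+1) = d!$ reduces this to \eqref{WeylN}.

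The main obstacle is the lower bound in the second step, i.e., verifying that boundary corrections contribute only $o(t^{-d})$ to the trace. In the Brownian (Laplacian) setting the exit probability from a neighborhood of an interior point is exponentially small in $t$, making the analogous step trivial; but the Cauchy process is a pure-jump process with heavy tails, so $\mathbb{P}_{\mathbf{x}}(\tau_\Omega \le t)$ decays only polynomially in $t/d(\mathbf{x},\partial\Omega)$. Careful estimates using the explicit transition density \eqref{td}, or equivalently direct Dirichlet--Neumann bracketing on a partition of $\Omega$ into small cubes for which the eigenvalue distribution is accessible by scaling, are therefore required to confirm that the boundary layer contributes only at lower order.
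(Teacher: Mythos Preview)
Your plan is correct and coincides with the paper's: reduce to $m=0$, show $t^d Z(t)\to c_d|\Omega|$ by bounding the killed heat kernel above by the free one and below via a Hunt-formula/exit-probability estimate, then apply Karamata's Tauberian theorem. One technical remark: in your Feynman--Kac inequality the Hunt formula actually produces $p_0(t-\tau_\Omega,\,X_{\tau_\Omega}-\mathbf{x})$, so the relevant supremum is over $0<s\le t$ and $\mathbf{y}\notin\Omega$, giving $r_\Omega(t,\mathbf{x},\mathbf{x})\le c_d\,t\,\delta_\Omega(\mathbf{x})^{-(d+1)}\,\mathbb{P}_{\mathbf{x}}(\tau_\Omega<t)$ --- this is exactly the Ba\~nuelos--Kulczycki bound the paper invokes; the paper then splits $\Omega$ into a boundary layer $\{\delta_\Omega<\sqrt t\}$ and its complement rather than exhausting by fixed compacts $K\Subset\Omega$, but the two packagings yield the same conclusion.
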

Equivalently, as $k\to \infty$,
\begin{equation}\label{wlfb}
\beta_k \sim  \sqrt{4 \pi} \left(\frac{\Gamma(1+d/2) k}{|\Omega|}\right)^{1/d}.
\end{equation}
Moreover, the function $U$ of \eqref{Udef} satisfies
\begin{equation}\nonumber
U(z) \sim \frac{|\Omega|}{2\pi^{d/2} (d^2-1) \Gamma(1+d/2)}z^{d+1}.
\end{equation}

\begin{proof} By Karamata's Tauberian theorem \cite{Sim}, if we can show that
for $t \rightarrow 0$,
$$t^dZ(t)\to c_d|\Omega|,$$
then the first claim follows from \eqref{ZN}.
The further claims for $\beta_k$ and $U(z)$ are easy consequences of
\eqref{WeylN}.

By a standard comparison,
\begin{equation}\label{inpomegap0}p_{\Omega}({\mathbf x},{\mathbf y},t) < p_0({\mathbf
x}-{\mathbf y},t)\end{equation} on $\Omega$, where $p_{\Omega}$ is
the integral kernel of the semigroup $\displaystyle{e^{-tH_{0,\Omega}}}$. Define
$$r_{\Omega}:=p_0({\mathbf x}-{\mathbf y},t)-p_{\Omega}({\mathbf x},{\mathbf y},t),$$
and let $\delta_{\Omega}({\mathbf x}):={\rm dist}({\mathbf
x},\partial \Omega)$.  According to \cite{BanKul},
$$0\leq r_{\Omega}\leq \frac{t}{\delta_{\Omega}^{d+1}(x)}c_d {\mathcal P}^{{\mathbf y}}(\tau_{\Omega}<t),$$
where ${\mathcal P}^{\mathbf y}(\tau_{\Omega})$ is the probability
that a path originating at ${\mathbf y}$ exits $\Omega$ before time $t$. Thus,
$$\int_{\Omega} p_{\Omega}({\mathbf x},{\mathbf x},t)
\leq \int_{\Omega} p_0({\mathbf 0},t)=c_d\frac{|\Omega|}{t^d}.$$ It
is shown in \cite{BanKul} that ${\mathcal P}^{{\mathbf
y}}(\tau_{\Omega}<t)\to 0$ as $t\to 0^+$, and we proceed to
calculate:
\begin{eqnarray} \nonumber \int_{\Omega}
p_{\Omega}({\mathbf x},{\mathbf x},t)d{\mathbf x} &=& \int_{\Omega}
p_0({\mathbf 0},t)d{\mathbf x}-\int_{\Omega} r_{\Omega}d{\mathbf x}\\
\nonumber&\geq& c_d\frac{|\Omega|}{t^d}- (o(1_t))\cdot
\left\{\int_{\{{\mathbf x}:~ \delta({\mathbf
x})<\sqrt{t}\}}\frac{t}{\delta_{\Omega}^{d+1}({\mathbf x})}+
\int_{\{{\mathbf x}:~
\delta({\mathbf x})>\sqrt{t}\}}\frac{t}{\delta_{\Omega}^{d+1}({\mathbf x})}\right\}.\\
&\quad& \label{twoint}
\end{eqnarray}
The first integral on the right side of (\ref{twoint}) becomes
\begin{eqnarray}\int_{\{{\mathbf x}: \delta({\mathbf
x})<\sqrt{t}\}}r_{\Omega}d{\mathbf x}&=&
\int_{\{{\mathbf
x}:~ \delta({\mathbf
x})<\sqrt{t}\}}\frac{t}{\delta_{\Omega}^{d+1}({\mathbf x})}d{\mathbf x} \nonumber\\
&\leq&
C\int_{\Omega-\Omega_{\sqrt{t}}}\frac{t}{(t^2)^{(d+1)/2}}d{\mathbf
x}\nonumber\\
&=&C \, t^{-d}|\Omega-\Omega_{\sqrt{t}}|. \label{int1}
\end{eqnarray}
As for the second integral,
\begin{eqnarray}\int_{\{{\mathbf x}:
\delta({\mathbf x})>\sqrt{t}\}}r_{\Omega}d{\mathbf
x}&=&\int_{\{{\mathbf x}:~ \delta({\mathbf
x})>\sqrt{t}\}}\frac{t}{\delta_{\Omega}^{d+1}({\mathbf x})}d{\mathbf
x}\nonumber\\
&\leq& \frac{t}{(t)^{(d+1)/2}}|\Omega|\nonumber\\
&=& 0(t^{(1-d)/2})<<t^{-d}. \label{int2}\end{eqnarray}
With
(\ref{int1}) we thus validate the condition allowing the application of
Karamata's Tauberian Theorem.
\end{proof}

An easy corollary of Theorem \ref{uindf} is a counterpart for $H_{0,\Omega}$ to the
Li-Yau inequality for the
Laplacian \cite{LiYau}.  (As noted in \cite{LapWeidl}, the Li-Yau inequality
is equivalent to an earlier inequality by Berezin \cite{Bere} through the Legendre transform.
See also \cite{LieLos}.)

Since we know that
$z^{-(d+1)} U(z) \uparrow \frac{2c_d|\Omega|}{d!(d^2-1)}$,
and that because of \eqref{simpleupper}, a choice of $z$
safely guaranteed to exceed $\beta_k$ is $z = \frac{d+1}{d-1}
\overline{\beta_k}$, with the aid of \eqref{Ulower} we obtain
$$
\frac{2c_d|\Omega|}{d!(d^2-1)} \ge
 \frac{k}{\overline{\beta_k} }  \left( \frac{2}{d-1}\overline{\beta_k}  \right)^2 \left(\frac{d+1}{d-1} \overline{\beta_k}\right)^{-(d+1)}.
$$
This leads directly to the semiclassical estimate:
\begin{equation}\label{OldGenLiYau}
{\overline{\beta_k}} \ge \frac{(d-1) 2^{1/d} \sqrt{4 \pi}}{d+1} \left(\frac{\Gamma(1+d/2) k}{|\Omega|}\right)^{1/d}.
\end{equation}
However, a better estimate, improving $(d-1)2^{1/d}$ to $d$,
can be derived by following the argument of
Li and Yau \cite{LiYau} more closely.
As a first step we slightly generalize the lemma attributed in \cite{LiYau} to
H\"ormander:

\begin{lemma}\label{hormander}
Let $f: {\mathbb R}^d \to {\mathbb R}$ satisfy $0\le f({\bf \xi})\le
M_1$ and
\begin{equation}
\int_{{\mathbb R}^d} f({\bf \xi})w(|{\bf \xi}|)d{\bf \xi}\le M_2,\end{equation}
where the weight function $w$ is nonnegative and nondecreasing.
Define $R=R(M_1,M_2)$ by the condition that
\begin{equation}
\int_{B_R} w(|{\bf \xi}|)d{\bf \xi}=\omega_{d-1}\int^R_0 w(r) r^{d-1} dr=
\frac{M_2}{M_1},
\end{equation}
where $\omega_{d-1} := |{\mathbb S}^{d-1}| = \frac{2 \pi^{d/2}}{\Gamma(d/2)}$.  Then
\begin{equation}
\int_{{\mathbb R}^d} f({\bf \xi})d{\bf \xi}\le \frac{ \pi^{d/2} M_1}{\Gamma(1+d/2)} R^d.
\end{equation}
As a special case, if $w({\bf \xi})=|{\bf \xi}|^p$, then
$R=\left[\frac{M_2(d+p)}{M_1w_{d-1}}\right]^{\frac1{d+p}}$, and so
\begin{align*}
\int_{{\mathbb R}^d}f({\bf \xi})d{\bf \xi}&\le \frac1d ((d+p)M_2)^{\frac d{d+p}}
(w_{d-1}M_1)^{\frac {-p}{d+p}}\\
&=\left(\frac{d+p}{d} M_2\right)^{\frac d{d+p}}
\left(\frac{\pi^{d/2} M_1}{\Gamma(1+d/2)}\right)^{\frac
p{d+p}}.\end{align*}
\end{lemma}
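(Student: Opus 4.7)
The plan is a rearrangement-type argument: subject to the constraints $0 \le f \le M_1$ and $\int f \, w \le M_2$, the quantity $\int f \, d\xi$ is maximized by packing $f$ at its ceiling $M_1$ into the region where the weight $w$ is smallest. Since $w(|\xi|)$ is nondecreasing in $|\xi|$, this extremal distribution is $M_1 \chi_{B_R}$, and $R$ is chosen precisely so that this configuration saturates the moment constraint: $\int M_1 \chi_{B_R} w = M_1 \int_{B_R} w = M_2$.

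To make this rigorous, I would consider the signed function $g := M_1 \chi_{B_R} - f$ and study $\int g \, w$. By the choice of $R$,
\begin{equation*}
\int_{\mathbb{R}^d} g(\xi) w(|\xi|) \, d\xi = M_2 - \int_{\mathbb{R}^d} f \, w \ge 0.
\end{equation*}
Split this integral into the pieces on $B_R$ and $B_R^c$:
\begin{equation*}
\int_{B_R} (M_1 - f) w \, d\xi \;\ge\; \int_{B_R^c} f \, w \, d\xi.
\end{equation*}
On $B_R$, the integrand $M_1 - f$ is nonnegative and $w(|\xi|) \le w(R)$; on $B_R^c$, $f \ge 0$ and $w(|\xi|) \ge w(R)$. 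Therefore
\begin{equation*}
w(R) \int_{B_R} (M_1 - f) \, d\xi \;\ge\; \int_{B_R} (M_1-f) w \;\ge\; \int_{B_R^c} f \, w \;\ge\; w(R) \int_{B_R^c} f \, d\xi.
\end{equation*}
Dividing by $w(R)$ (assuming it is positive; otherwise $w \equiv 0$ on $[0,R]$ and the inequality is trivial or one argues by a limit) and rearranging yields $\int_{\mathbb{R}^d} f \, d\xi \le M_1 |B_R| = \frac{\pi^{d/2} M_1}{\Gamma(1+d/2)} R^d$, which is the first claim.

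For the special case $w(\xi) = |\xi|^p$, the defining relation for $R$ becomes $\omega_{d-1} \int_0^R r^{d+p-1} \, dr = \frac{\omega_{d-1} R^{d+p}}{d+p} = M_2/M_1$, giving the stated formula for $R$. Substituting $R^d$ into $M_1 |B_R| = \frac{M_1 \omega_{d-1}}{d} R^d$ and using $|B_1| = \omega_{d-1}/d = \pi^{d/2}/\Gamma(1+d/2)$ yields the displayed product of powers of $M_1$ and $M_2$ after routine algebra.

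The only subtle point I anticipate is the monotonicity manipulation: it relies on the fact that $M_1 - f \ge 0$ pointwise (so we can bound $w \le w(R)$ on $B_R$ without reversing the inequality) and $f \ge 0$ on $B_R^c$ (so we can bound $w \ge w(R)$ there). Both follow from the hypothesis $0 \le f \le M_1$, so no obstacle arises; the argument is essentially a layer-cake/bathtub-principle comparison dressed in the moment constraint.
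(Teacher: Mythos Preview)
Your proof is correct and follows essentially the same route as the paper's. The paper packages your case split into the single pointwise observation $(w(|\xi|)-w(R))(f(\xi)-M_1\chi_{B_R}(\xi))\ge 0$, which upon integration immediately gives $w(R)\int(f-M_1\chi_{B_R})\le \int w(f-M_1\chi_{B_R})\le 0$; this is exactly your two-region monotonicity estimate written in compressed form.
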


{\bf Proof.}  Let $g({\bf \xi}):=M_1\chi_{\{|{\bf \xi}|\le R\}}$ and note
that according to the definition of $R$, $\int w(|{\bf \xi}|)g({\bf \xi})d{\bf \xi}=M_2$.
We observe that
$(w(|{\bf \xi}|)-w(R))(f({\bf \xi})-g({\bf \xi}))\ge 0$ for all
${\bf \xi}$.  (Check $|{\bf \xi}|\le R$ and $|{\bf \xi}|>R$ separately.)
Hence
\begin{equation}
w(R)\int (f({\bf \xi})-g({\bf \xi}))d{\bf \xi}\le\int w(|{\bf \xi}|)
(f({\bf \xi})-g({\bf \xi}))=0,
\end{equation}
and, consequently,
\begin{equation}
\int f({\bf \xi})d{\bf \xi}\le \int g({\bf \xi})d{\bf \xi}=
|B_R| M_1=\frac{\pi^{d/2} M_1}{\Gamma(1+d/2)} R^d.
\end{equation}
\hfill$\Box$

For the application to $H_{0,\Omega}$, note that
\begin{equation}
\beta_\ell=\langle u_\ell, H_{0,\Omega} u_\ell\rangle=\int |{\bf \xi}|
|\hat u_\ell ({\bf \xi})|^2 d{\bf \xi}
\end{equation}
Choosing $w(|{\bf \xi}|)=|{\bf \xi}|$ in the lemma, with
$f({\bf \xi})=\sum^k_{\ell=1} |\hat u_\ell ({\bf \xi})|^2$, we find
\begin{equation}
k=\int f({\bf \xi})d{\bf \xi} \le \left(\|f\|_\infty \frac{\pi^{d/2}}{\Gamma(1+d/2)}\right)^{\frac 1{d+1}}
\left(\left(\sum^k_{\ell=1}\beta_\ell\right)\frac{d+1}d\right)^{\frac d{d+1}},
\end{equation}
or
\begin{equation}
\sum^k_{\ell=1}\beta_\ell\ge \frac d{d+1}\left(\frac{\Gamma(1+d/2)}{\pi^{d/2}\|f\|_\infty}
\right)^{1/d} k^{1+\frac1d}.\end{equation}
As for $\|f\|_\infty$,
\begin{align*}
\sum^k_{\ell=1} |\hat u_\ell ({\bf \xi})|^2&=\sum^k_{\ell=1} \frac1{(2\pi)^d}
\Bigl|\int_\Omega e^{i{\bf x}\cdot{\bf \xi}}u_\ell ({\bf x}) d{\bf x}\Bigr|^2\\
&=\frac1{(2\pi)^d}\sum^k_{\ell=1} \left| \langle e^{i{\bf x}\cdot{\bf \xi}},
u_k\rangle\right|^2\le \frac{|\Omega|}{(2\pi)^d}
\end{align*}
by Bessel's inequality, as $\|e^{i{\bf x}\cdot{\bf \xi}}\|^{2}_2=
|\Omega|$.
In conclusion, we have an analogue of the Li-Yau inequality \cite{LiYau}:

\begin{theorem}\label{GLY}
For all $k=1,\dots$, the eigenvalues
$\beta_k$ of $|{\mathbf P}|_{\Omega}$
satisfy
\begin{equation}\label{GenLiYau}
\overline{\beta_k}
\ge \frac{ \sqrt{4 \pi} d}{d+1}\left(\frac{\Gamma(1+d/2)k}{|\Omega|}\right)^{1/d}.
\end{equation}
\end{theorem}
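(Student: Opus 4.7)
The plan is to carry out a Li--Yau-style argument adapted to the linear symbol of $H_{0,\Omega}$, using Lemma~\ref{hormander} as the analytic engine. The three ingredients I would assemble are: (i) a momentum-space representation of the eigenvalue sum, (ii) an $L^\infty$ bound on the summed spectral density, and (iii) application of the lemma with the weight $w(|\xi|) = |\xi|$, which encodes the first-order nature of $H_{0,\Omega}$.

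First I would express each eigenvalue by Fourier transform. Since $H_{0,\Omega}$ is the Friedrichs extension of the form $\varphi \mapsto \langle \varphi, \sqrt{-\Delta}\,\varphi\rangle$ on $C_c^\infty(\Omega)$, each eigenfunction $u_\ell$ extended by zero to $\R^d$ lies in the form domain of $\sqrt{-\Delta}$ on the whole space, so
\begin{equation}\nonumber
\beta_\ell \;=\; \langle u_\ell, H_{0,\Omega}\,u_\ell\rangle \;=\; \int_{\R^d} |\xi|\,|\hat u_\ell(\xi)|^2\,d\xi.
\end{equation}
Setting $f(\xi) := \sum_{\ell=1}^k |\hat u_\ell(\xi)|^2$, orthonormality of the $u_\ell$ and Plancherel give $\int f\,d\xi = k$, while summing the identity above yields $\int |\xi|\,f(\xi)\,d\xi = k\,\overline{\beta_k} =: M_2$. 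For the pointwise bound, I would expand $\hat u_\ell(\xi) = (2\pi)^{-d/2}\langle e^{i\,\mathbf{x}\cdot\xi}\chi_\Omega, u_\ell\rangle_{L^2(\Omega)}$ and apply Bessel's inequality to the orthonormal family $\{u_\ell\}$, obtaining $\|f\|_\infty \le |\Omega|/(2\pi)^d =: M_1$.

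The special case $p=1$ of Lemma~\ref{hormander} then gives
\begin{equation}\nonumber
k \;=\; \int f\,d\xi \;\le\; \left(\tfrac{d+1}{d}\,k\,\overline{\beta_k}\right)^{d/(d+1)} \left(\frac{\pi^{d/2}\,|\Omega|}{(2\pi)^d\,\Gamma(1+d/2)}\right)^{1/(d+1)}.
\end{equation}
Raising both sides to the power $d+1$, solving for $\overline{\beta_k}$, and simplifying $(2\pi)^d/\pi^{d/2} = 2^d\,\pi^{d/2}$ produces precisely the constant $\sqrt{4\pi}\,d/(d+1)$ and the geometric factor $(\Gamma(1+d/2)k/|\Omega|)^{1/d}$ of the stated bound.

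The main subtlety lies not in the algebra but in justifying the momentum-space formula for $\beta_\ell$: one must use that $H_{0,\Omega}$ is defined through the quadratic form of $\sqrt{-\Delta}$ on $C_c^\infty(\Omega)$---\emph{not} through the functional calculus of the Dirichlet Laplacian---so that eigenfunctions extended by zero are legitimate inputs to the whole-space Fourier representation \eqref{sqrt}. Once this is in place, the Bessel bound $\|f\|_\infty \le |\Omega|/(2\pi)^d$ is uniform in $\xi$ precisely because the test vectors $e^{i\,\mathbf{x}\cdot\xi}\chi_\Omega$ all have the same $L^2(\Omega)$-norm $\sqrt{|\Omega|}$, and the rest is a direct invocation of the lemma.
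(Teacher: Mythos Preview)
Your proposal is correct and follows essentially the same route as the paper: you take $f(\xi)=\sum_{\ell=1}^k|\hat u_\ell(\xi)|^2$, bound $\|f\|_\infty$ by $|\Omega|/(2\pi)^d$ via Bessel's inequality, and feed this together with the identity $\beta_\ell=\int|\xi|\,|\hat u_\ell|^2\,d\xi$ into Lemma~\ref{hormander} with $w(|\xi|)=|\xi|$. Your remark on the distinction between $H_{0,\Omega}$ and $\sqrt{-\Delta_\Omega}$, needed to justify the Fourier representation of $\beta_\ell$, is a useful clarification that the paper leaves implicit.
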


We observe that, just like the Li-Yau inequality for the Laplacian,
\eqref{GenLiYau} has the best possible coefficient consistent with
the Weyl-type law of Proposition \ref{asym}. Moreover, in view of
\eqref{ibbl}, Theorem \ref{GLY} has a corollary for the Dirichlet
Laplacian:
\begin{equation}\label{SqrtLiYau}
\frac{1}{k}\sum_{\ell=1}^k{\sqrt{\lambda_\ell}}
\ge \frac{ \sqrt{4 \pi} d}{d+1}\left(\frac{\Gamma(1+d/2)k}{|\Omega|}\right)^{1/d},
\end{equation}
which is comparable to the Li-Yau inequality, but neither implies it
nor is directly implied by it .  (For an alternative route to \eqref{SqrtLiYau}
see Theorem 5.1 of \cite{HarStu2}.)

\section{Universal bounds for $H_{m,\Omega} + V({\bf x})$}\label{quantum}

We turn now to the Klein-Gordon Hamiltonian with an external interaction,
\begin{equation}\label{Ham2}
H = H_{m,\Omega} + V({\bf x}).
\end{equation}
In a semi-relativistic approximation this Hamiltonian models the
motion of a spinless particle in an external force field. As
mentioned above, Hamiltonian operators similar to \eqref{Ham2} have
also recently been of interest as models of nonrelativistic charge
carriers traveling in a two-dimensional hexagonal structure like
carbon graphene. (What distinguishes graphene from the common
material graphite is that graphene sheets are only one atom thick.)
This material has been the subject of intense study recently because
of its remarkable electronic properties.  Due to the special
symmetry of the hexagonal lattice, standard approximations in
condensed-matter theory do not lead to the usual effective mass
approximation for charge carriers, but rather, they behave like
relativistic particles with a reduced ``speed of light.'' On the
theoretical side this has been known since 1947 when the
(unintegrated) density of states at low energies was calculated in a
tight-binding approximation and found to be proportional to $|E -
E_0|$ as a function of energy $E$, as is the case for a
two-dimensional relativistic particle \cite{Wal}. Confirming
experiments date from the past decade (e.g.,
\cite{NGMJKGDF,SadMarEtc,deHBerEtc}), where the charge carriers are
electrons. A calculation of the density of states does not in fact
allow an unambiguous determination of the effective Hamiltonian of
particles moving in graphene, so the details of models used in the
physical literature vary. Furthermore, although the standard
effective-mass approximation for periodic Schr\"odinger Hamiltonians
has had a rigorous mathematical basis since the work of Odeh and
Keller \cite{OdKe} (see also \cite{Bus,GuRaTr,Nen,GeMaSj}), we are
unaware of comparably convincing analysis of the effective
Hamiltonian for materials like graphene that offer a clear
prescription for treating boundaries. The practice in the physical
literature has been to propose relativistic Hamiltonians with {\it
ad hoc} modifications to account for the effect of the boundary
geometry, the effects of have become accessible to experiment quite
recently (e.g., \cite{MogZar,NFDD,PSKY,BadKor}). For a sampling of
the different graphene-related models and calculations, see
\cite{Wal,Spa,Sem,JacPi,Ran}. Because the usual charge carrier is an
electron, which is a spin $\frac1 2$ particle, more often than not
the Hamiltonian is chosen as a Dirac operator acting on the set of
two-component spinors. We hope to elaborate the spectral theory of
Hamiltonians with spin in future work, but in the present work we
content ourselves with the study of \eqref{Ham2}, and we also
continue to restrict the Hamiltonian to a finite domain in order to
achieve a discrete spectrum.  Our point of departure to derive
useful spectral bounds for \eqref{Ham2} is \eqref{UvsR1}, which
remains valid for interacting operators $H$.
\begin{theorem}
Let $\beta_k$ denote the eigenvalues of \eqref{Ham2}, and set

$\displaystyle{U(z):=\sum_{k}\frac{(z-\beta_k)_+^2}{\beta_k}}$ as in
\eqref{Udef}.  Assume that the measurable function

$V=V_+-V_-$ with
$V_{\pm}\geq 0$ and $V_-\in L^s$ for some $\displaystyle{2\leq
d<s<\infty}$. If
\begin{equation}\label{V-est}\|V_-\|_s <
\frac{\sqrt{\pi}2^{\frac{(d-1)^2}{d}}\Gamma\left(\frac{d}{2}\right)^{\frac{1-2d}{d}}
(d|\Omega|)^{\frac{d-s}{sd}}(s-d)^{\frac{s-1}{s}}}
{(d-2)!(s-1)^{\frac{s-1}{s}}},\end{equation} let us define $\alpha <
1$ by
$$\alpha:=\frac{\|V_-\|_s(d-2)!(s-1)^{\frac{s-1}{s}}}{\sqrt{\pi}2^{\frac{(d-1)^2}{d}}
\Gamma\left(\frac{d}{2}\right)^{\frac{1-2d}{d}}
(d|\Omega|)^{\frac{d-s}{sd}}(s-d)^{\frac{s-1}{s}}}.$$
Then for each
$k$, the eigenvalues $\beta_k$ satisfy
\begin{equation}\label{funrat2}
\frac{\beta_{k+1}}{\overline{\beta_k}}
\le \overline{\beta_k^{-1}} {\beta_{k+1}}   \le 1+\frac{2}{(d-1)(1-\alpha)}.
\end{equation}
Moreover, $\displaystyle{\frac{U(z)}{z^{((d+1)-\alpha(d-1))}}}$ is a
nondecreasing function of $z\in{\mathbb R}$, and for $k > 2j$,
\begin{equation}\label{modifratio}\frac{\overline{{\beta_k}}}{\overline{\beta_j}}\leq
\frac{d-\alpha(d-1)}{(d-1)(1-\alpha)2^{1/(d-\alpha(d-1))}}\left(
\frac{k}{j}\right)^{1/(d-\alpha(d-1))}.
\end{equation}
\end{theorem}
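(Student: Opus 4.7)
The plan is to parallel the derivation of Theorems~\ref{upperbd}--\ref{uindf}, replacing the factor $(d-1)$ by $(d-1)(1-\alpha)$ throughout so as to absorb the relative form-bound of $V_-$ against the kinetic operator $H_{m,\Omega}$. Since $V$ is a multiplication operator, $[V,x_\alpha]=0$ and $[x_\alpha,[V,x_\alpha]]=0$, so the identities \eqref{fcomm} and \eqref{scomm} are unaffected when $H_{m,\Omega}$ is replaced by $H=H_{m,\Omega}+V$. Summing \eqref{R2mod} over $\alpha$ gives, exactly as in the proof of Theorem~\ref{upperbd},
\begin{equation*}
(d-1)\sum_{j:\beta_j\le z}(z-\beta_j)^2 A_j \;\le\; 2\sum_{j:\beta_j\le z}(z-\beta_j),
\end{equation*}
where now $\beta_j$ and $u_j$ denote eigenpairs of the interacting operator $H$, and $A_j:=\int_{\mathbb{R}^d}(|\xi|^2+m^2)^{-1/2}|\hat u_j|^2\,d\xi$.

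The heart of the proof is the pointwise lower bound $A_j\ge (1-\alpha)/\beta_j$. Cauchy--Schwarz on the probability measure $|\hat u_j|^2\,d\xi$ gives $A_j B_j \ge 1$, where $B_j:=\int\sqrt{|\xi|^2+m^2}\,|\hat u_j|^2\,d\xi = \langle u_j,H_{m,\Omega}u_j\rangle$. The eigenvalue equation and $V_+\ge 0$ together imply $B_j \le \beta_j + \langle u_j,V_-u_j\rangle$, so it suffices to verify the uniform relative form-bound $\langle u,V_- u\rangle\le\alpha\,\langle u,H_{m,\Omega}u\rangle$ for all $u\in L^2(\Omega)$ with $\|u\|_2=1$. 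This I would chain from three classical ingredients: H\"older's inequality $\langle u,V_-u\rangle\le\|V_-\|_s\|u\|_{2s/(s-1)}^2$; the volume-based H\"older inclusion $\|u\|_{2s/(s-1)}^2\le|\Omega|^{(s-d)/(sd)}\|u\|_{2d/(d-1)}^2$ (valid because $s>d$ and $u$ is supported in $\Omega$); and the sharp fractional Sobolev inequality $\|u\|_{2d/(d-1)}^2\le S_d\int|\xi||\hat u|^2\,d\xi \le S_d B_j$. The explicit form of $\alpha$ in the theorem is what emerges once the sharp constant $S_d$ for $(-\Delta)^{1/4}$ on $\mathbb{R}^d$ is expressed via gamma-function identities, and the hypothesis on $\|V_-\|_s$ precisely ensures $\alpha<1$.

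With $A_j\ge (1-\alpha)/\beta_j$ in hand, the remainder is algebra parallel to Section~2. Substituting into the Harrell--Stubbe inequality produces $(d-1)(1-\alpha)U(z)\le 2R_1(z)$, where $U$ is the function of \eqref{Udef} and $R_1(z)=\sum_k(z-\beta_k)_+$. The elementary identity $zU'(z)=2U(z)+2R_1(z)$ rearranges this into $[d+1-\alpha(d-1)]U(z)\le zU'(z)$, which is precisely the claimed monotonicity of $U(z)/z^{(d+1)-\alpha(d-1)}$. For \eqref{funrat2}, setting $z=\beta_{n+1}$ and summing $j=1,\dots,n$ produces the quadratic inequality
\begin{equation*}
(d-1)(1-\alpha)\overline{\beta_n^{-1}}\,z^2 - 2[d-\alpha(d-1)]z + [d+1-\alpha(d-1)]\overline{\beta_n}\le 0.
\end{equation*}
A direct calculation reveals the clean algebraic identity $[d-\alpha(d-1)]^2 - (d-1)(1-\alpha)[d+1-\alpha(d-1)] = 1$; combining this with Cauchy--Schwarz $\overline{\beta_n}\,\overline{\beta_n^{-1}}\ge 1$ bounds the larger root by $[d+1-\alpha(d-1)]/[(d-1)(1-\alpha)\overline{\beta_n^{-1}}]=\{1+2/[(d-1)(1-\alpha)]\}/\overline{\beta_n^{-1}}$, giving \eqref{funrat2}. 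The final estimate \eqref{modifratio} then follows by applying the Legendre transform to the modified monotonicity, exactly as in the passage from Theorem~\ref{uindf} to \eqref{ltap}, with $d$ replaced throughout by the effective dimension $d-\alpha(d-1)$.

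The main obstacle is establishing the relative form-bound with the sharp constant; the remaining algebra is routine once this is in place. Tracking the precise numerical constants through the H\"older--volume--Sobolev chain---especially the sharp Sobolev constant for $(-\Delta)^{1/4}$ on $L^2(\mathbb{R}^d)$---is what generates the intricate closed form for $\alpha$ stated in the theorem.
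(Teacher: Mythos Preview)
Your overall architecture is correct and the algebraic portion (the quadratic in $z$, the monotonicity of $U(z)/z^{(d+1)-\alpha(d-1)}$, and the Legendre-transform step) matches the paper's line for line. The genuine divergence is in how you obtain the key lower bound $A_j\ge(1-\alpha)/\beta_j$.

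The paper does \emph{not} use a relative form bound. Instead it works with the resolvent: from $(H_{m,\Omega}+V)^{-1}\le(H_{m,\Omega}-V_-)^{-1}$ and the second resolvent identity it reduces the problem to an \emph{operator-norm} bound $\|V_-H_{m,\Omega}^{-1}\|_{L^2\to L^2}\le\alpha$. This is then proved by H\"older, the domination $H_{m,\Omega}^{-1}\le H_{0,\Omega}^{-1}$, the explicit pointwise kernel estimate $H_{0,\Omega}^{-1}(x,y)\le \frac{c_d}{d-1}|x-y|^{-(d-1)}$ (obtained by integrating the Cauchy transition density in $t$), Young's convolution inequality, and a rearrangement bound for $\||x|^{-(d-1)}\|_{L^{s/(s-1)}(\Omega)}$ on the equimeasure ball. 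Your route---Cauchy--Schwarz $A_jB_j\ge1$ together with the Kato-type form bound $\langle u,V_-u\rangle\le\alpha\langle u,H_{m,\Omega}u\rangle$ via H\"older, the volume inclusion, and the sharp fractional Sobolev inequality---is logically sound and arguably more streamlined, and it yields the same structural conclusion $A_j\ge(1-\alpha)/\beta_j$.

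The gap is in the constants. The specific $\alpha$ displayed in the theorem is exactly the product of the ingredients in the paper's chain: $c_d/(d-1)$ from the kernel, the Young exponent, and the $L^{s/(s-1)}$ norm of $|x|^{-(d-1)}$ over $B_{R^*}$ with $|B_{R^*}|=|\Omega|$. Your chain produces instead $\|V_-\|_s\,|\Omega|^{(s-d)/(sd)}\,S_d$, where $S_d$ is the sharp constant for $\dot H^{1/2}(\mathbb R^d)\hookrightarrow L^{2d/(d-1)}$. There is no reason these should coincide, and your assertion that ``the explicit form of $\alpha$ in the theorem is what emerges'' from the Sobolev constant is not supported; the $(s-1)^{(s-1)/s}/(s-d)^{(s-1)/s}$ factor in the stated $\alpha$, for instance, is the fingerprint of the rearranged Young estimate, not of a Sobolev embedding. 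So your argument proves a theorem of the same shape with a different (and, as written, uncomputed) threshold on $\|V_-\|_s$, but not the statement exactly as given.
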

\begin{proof}
From \eqref{UvsR1},
\begin{equation}\label{UvsR1again}
(d-1)\sum_{j=1}^n(z-\beta_j)^2\langle u_j,
H_{m,\Omega}^{-1}u_j\rangle-2\sum_{j}(z-\beta_j) \le 0.
\end{equation}
Since $V_{\pm}\geq 0$,
$$H_{m,\Omega}+V > H_{m,\Omega}-V_-,$$
and so
$$(H_{m,\Omega}+V)^{-1}{\le}(H_{m,\Omega}-V_-)^{-1}.$$
Hence,
\begin{eqnarray*}
\frac{1}{\beta_j} = \langle u_j,(H_{m,\Omega}+V)^{-1}u_j\rangle
&{\le}&\langle
u_j,(H_{m,\Omega}-V_-)^{-1}u_j\rangle\\
&{\le}& \langle
u_j,(H_{m,\Omega}-V_-)^{-1}u_j\rangle\\
&=&\langle u_j,H_{m,\Omega}^{-1}u_j\rangle+
\langle u_j,(H_{m,\Omega}-V_-)^{-1}V_-H_{m,\Omega}^{-1}u_j\rangle, \\
\end{eqnarray*}
according to the resolvent formula.  Since
$$\langle u_j,(H_{m,\Omega}-V_-)^{-1}V_-H_{m,\Omega}^{-1}u_j\rangle
=\frac{1}{\beta_j}\langle
u_j,V_-H_{m,\Omega}^{-1}u_j\rangle,
$$
\begin{equation}\label{1-hinv}
\frac{1}{\beta_j}\left( 1-\langle u_j,V_-H_{m,\Omega}^{-1}u_j\rangle
\right){\le} \langle u_j,H_{m,\Omega}^{-1}u_j\rangle.\end{equation}
If $2\leq d\leq s<\infty$, we now claim that
\begin{equation}\label{vhinv}\|V_-H_{m,\Omega}^{-1}\varphi\|_2\leq
\alpha\|\varphi\|_2\end{equation} for any $\varphi\in L^2$. Granting
the claim, with  $\varphi=u_j$ in \eqref{1-hinv}, we get
\begin{equation} \label{vhinv2} \frac{1-\alpha}{\beta_j}{\le}\langle
u_j,H_{m,\Omega}^{-1}u_j\rangle.\end{equation} To establish
\eqref{vhinv} begin by noting that by H\"{o}lder's inequality,
\begin{equation}\label{eq1}
\|V_-H_{m,\Omega}^{-1}\varphi\|_2\leq
\|V_-\|_s\|H_{m,\Omega}^{-1}\varphi\|_{\frac{2s}{s-2}}.
\end{equation}
Because $H_{m,\Omega}\ge H_{0,\Omega}$,
\begin{equation}\label{eq2}
\|H_{m,\Omega}^{-1}\varphi\|_{\frac{2s}{s-2}}{\le}
 \|H_{0,\Omega}^{-1}\varphi\|_{\frac{2s}{s-2}}.
 \end{equation}
Inequality \eqref{inpomegap0} for the transition density implies
\begin{equation*}e^{-tH_{0,\Omega}}({\bf x},{\bf y},t)\leq
{p_0({\bf x}-{\bf y},t)} = \frac{-c_d}{d-1}\frac{\partial}{\partial
t} \left(t^2+|{\bf x}-{\bf
y}|^2\right)^{-\left(\frac{d-1}{2}\right)}.\end{equation*}
Applying the Laplace transform, the kernel of $H_{0,\Omega}^{-1}$ is
less than
$$ \int_0^{\infty}{\left(\frac{-c_d}{d-1}\frac{\partial}{\partial t}
\left(t^2+|{\bf x}-{\bf
y}|^2\right)^{-\left(\frac{d-1}{2}\right)}\right) dt} =
\frac{c_d}{d-1} |{\bf x}-{\bf y}|^{-(d-1)}.$$
Together with
\eqref{eq1} and \eqref{eq2} we get
\begin{equation*}\label{eq4}\|V_-H_{m,\Omega}^{-1}\varphi\|_2\leq
\frac{c_d}{d-1}\|V_-\|_s\||{\bf x}|^{-(d-1)}*\varphi\|_{\frac{2s}{s-2}}.
\end{equation*}
According to Young's convolution inequality,
$$\||{\bf x}|^{-(d-1)}*\varphi\|_{\frac{2s}{s-2}}\leq
\||{\bf x}|^{-(d-1)}\|_{\frac{s}{s-1}}\|\varphi\|_2,$$
so
\begin{equation}\label{vpinv}
\left\|
V_-H_{m,\Omega}^{-1}\varphi\right\|_2 \leq
\frac{\Gamma\left(\frac{d+1}{2}\right)}{\pi^{(d+1)/2}
(d-1)}
\|V_-\|_{s}\left\||{\bf x}|^{-(d-1)}\right\|_{\frac{s}{s-1}}
\|\varphi\|_2.
\end{equation}
For an upper bound to $\left\||{\bf x}|^{-(d-1)}\right\|_{\frac{s}{s-1}}$,
choose $R^*$ as the radius of the ball $B_{R^*}$ centered at the
origin having the same volume as $\Omega$. Since by rearrangement,
$$\||{\bf x}|^{-(d-1)}\|_{L^{\frac{s}{s-1}}(\Omega) }\leq
\||{\bf x}|^{-(d-1)}\|_{L^{\frac{s}{s-1}}(B_{R^*}) } =
\left(\omega_{d-1}\frac{(R^*)^{\frac{s-d}{s-1}}(s-1)}{s-d}
\right)^{\frac{s-1}{s}},$$ we get the estimate
\begin{equation}\label{nof|x|}\||{\bf x}|^{-(d-1)}\|_{\frac{s}{s-1}}<
2^{\frac{d-1}{d}}\pi^{\frac{d-1}{2}}\left[\Gamma\left(\frac{d}{2}\right)\right]^{\frac{1-d}{d}}
{(d|\Omega|)^{\frac{s-d}{sd}}}{\left(\frac{s-1}{s-d}\right)^{\frac{s-1}{s}}}.
\end{equation}
With
\eqref{vpinv} and \eqref{V-est} this implies \eqref{vhinv}
and consequently
\eqref{vhinv2}.
Because $\alpha < 1$ by assumption, \eqref{UvsR1again} together with
\eqref{vhinv2} yield
\begin{equation}\label{newineq}
(d-1)\sum_{j=1}^{n}\frac{1-\alpha}{\beta_j}(z-\beta_j)^2
-2\sum_{j=1}^{n}(z-\beta_j)\leq 0,
\end{equation}
or, equivalently,
\begin{equation}\label{newequiv}(d-1)(1-\alpha)\overline{\beta_k^{-1}}z^2-
2[d-\alpha(d-1)]z+[d+1-\alpha(d-1)]\overline{\beta_k}\le
0.\end{equation} By setting $z=\beta_{k+1}$, we see that
$\beta_{k+1}$ must be smaller than the larger root of
\eqref{newequiv}, i.e., after some algebra,
\begin{equation}\label{ugly2} \beta_{k+1} \le
\frac{(d-1)(1-\alpha) + 1 + \sqrt{1 - \left((d+\alpha - \alpha d)^2
-1\right) \left(\overline{\beta_k} \,\overline{\beta_k^{-1}} -
1\right)}} {(d-1)(1-\alpha)\overline{\beta_k^{-1}}}.
\end{equation}
\noindent
As was the case for \eqref{simpleupper}, with
the Cauchy-Schwarz inequality in the form $1\leq
\overline{\beta_k}\overline{\beta_k^{-1}}$, \eqref{ugly2} implies
the simpler but slightly weaker inequalities
\eqref{funrat2}.\\

Now observe that \eqref{newineq} differs from \eqref{moddiffineq}
only in the extra factor $1-\alpha{\, > 0}$, and therefore all of
the consequences of that inequality can be recovered with suitable
changes of some constants.  In particular, the function
$\displaystyle{\frac{U(z)}{z^{(d+1)-\alpha(d-1)}}}$ is
nondecreasing, and therefore,
\begin{equation} U(z)\geq
\left(\frac{z}{z_{j^*}}\right)^{(d+1)-\alpha(d-1)}U(z_{j^*})\end{equation}
when $z\geq z_{j^*}\geq \beta_j$. \\
At the same time, by
\eqref{newineq} we have
\begin{equation}\label{uvR1}
\frac{(d-1)(1-\alpha)}{2}U(z)\leq R_1(z).
\end{equation}
By \eqref{uvR1} and the fact that
$\displaystyle{\frac{U(z)}{j}\geq\frac{1}{\overline{\beta}_j}
(z-\overline{\beta}_j)^2},$ we obtain
\begin{equation}\label{newr1geqzp}
R_1(z)\geq
\frac{(d-1)(1-\alpha)j}{2\overline{\beta}_j}\left(\frac{z}{z_{j^*}}\right)^{(d+1)-\alpha(d-1)}
(z_{j^*}-\overline{\beta_j})^2.
\end{equation}
To maximize the coefficient of $\displaystyle{z^{d+1-\alpha(d-1)}}$
we optimize $z_{j^*}$ and get
$$\displaystyle{z_{j^*}=\frac{(d+1)-\alpha(d-1)}{(d-1)(1-\alpha)}\overline{\beta_j}}.$$
Substituting this into \eqref{newr1geqzp} gives
\begin{equation}\label{newr1}
R_1(z)\geq \frac{2j[(d-1)(1-\alpha)]^{d-\alpha(d-1)}}
{[(d+1)-\alpha(d-1)]^{(d+1)-\alpha(d-1)}\overline{\beta_j}^{~d-\alpha(d-1)}}z^{(d+1)-\alpha(d-1)}
\end{equation}
for all $\displaystyle{z\geq
\frac{(d+1)-\alpha(d-1)}{(d-1)(1-\alpha)}\overline{\beta_j}}$.

With the Legendre transform of the right hand side of \eqref{newr1},
we obtain
\begin{equation}k\overline{\beta_k}\leq \frac{[d-\alpha(d-1)]\overline{\beta_j}}
{[(d-1)(1-\alpha)]2^{1/(d-\alpha(d-1))}j^{1/(d-\alpha(d-1))}}k^{1+1/(d-\alpha(d-1))}.
\end{equation}
Therefore,
\begin{equation}
\frac{\overline{{\beta_k}}}{\overline{\beta_j}}\leq
\frac{d-\alpha(d-1)}{[(d-1)(1-\alpha)]2^{1/(d-\alpha(d-1))}}\left(
\frac{k}{j}\right)^{1/(d-\alpha(d-1))}
\end{equation}
as claimed.
\end{proof}

\bigskip
\subsection*{Acknowledgements}
The authors are grateful to Mark Ashbaugh, Lotfi Hermi, and Joachim
Stubbe for conversations and references.



\begin{thebibliography}{bibli}


\bibitem{Ash} M. S. Ashbaugh, The universal eigenvalue bounds of
Payne-P\'{o}lya-Weinberger, Hile-Protter, and H. C. Yang, in
Spectral and inverse spectral theory (Goa, 2000), Proc.\ Indian
Acad.\ Sci.\ Math.\ Sci.\ 112 (2002) 3--30.

\bibitem{AshHer} M. S. Ashbaugh and L. Hermi, A unified approach to
universal inequalities for eigenvalues of elliptic operators,
Pacific J.\ Math.\  217 (2004) 201-220.

\bibitem{BadKor}
A. Badanin and E. Korotyaev, Magnetic Schr\"odinger operators on
armchair nanotubes, preprint (2008) arXiv:0804.0171v1.

\bibitem{Ban}  C. Bandle, Isoperimetric Inequalities and Applications,
Pitman Monographs and Studies in Mathematics, vol. 7, Pitman, Boston
(1980).

\bibitem{BanKul} R. Ba\~{n}uelos and T. Kulczycki, The Cauchy process and the
Steklov problem, J. Funct. Analysis 234 (2006) 199-–225.

\bibitem{BanKul2} R. Ba\~{n}uelos and T. Kulczycki, Eigenvalue gaps for the Cauchy process
and a Poincar\'{e} inequality, J. Funct. Analysis 211(2) (2004)
355--423.

%
\bibitem{Bera}  P.~H. B\' erard, Spectral Geometry: Direct and Inverse
Problems
Lect.\  Notes in Math., vol. 1207, Springer-Verlag, Berlin (1986).

\bibitem{Bere} F. Berezin, Convariant and contravariant
symbols of operators, Izv.  Akad.\ Nauk SSSR 37 (1972) 1134--1167.
[In Russian, English transl. in Math.\ USSR-Izv. 6 (1972) 1117-1151
(1973)].
%

\bibitem{Bus}
V.S. Buslaev, Semiclassical approximation for equations with
periodic coefficients, Uspekhi Mat. Nauk 42(1987)77-98.  [In
Russian, English transl. in Russ Math. Surv. 42 (1987) 97--125.

\bibitem{Cha}  I. Chavel, Eigenvalues in Riemannian Geometry, New York: Academic
(1984)

\bibitem{Che}  D. Chen, Extrinsic eigenvalue estimates of the Dirac operator, 2007 preprint,
arXiv:math/0701847v1.

\bibitem{CY1} Q.~M. Cheng and H.~C. Yang, Estimates on eigenvalues of Laplacian,
 Math. Ann. 331 (2005) 445--460.

\bibitem{CY2} Q.~M. Cheng and H.~C. Yang, Inequalities for eigenvalues of Laplacian on domains and
complex hypersurfaces in complex projective spaces, J. Math. Soc.
Japan. 58 (2006) 545--561.

\bibitem{CY3}
Q. M. Cheng and H. C. Yang, Inequalities for eigenvalues of a
clamped plate problem, Trans. Amer. Math. Soc. 358 (2006) 2625-2635.

\bibitem{Dav} E.B. Davies,  Heat Kernels and Spectral Theory, Cambridge University Press,
Cambridge (1989)

\bibitem{deHBerEtc}
W.~A. de Heer, C. Berger, X. Wu, P.~N. First, E.~H Conrad, X. Li, T.
Li, M. Sprinkle, J. Hass, M.~L. Sadowski, M. Potemski, and G.
Martinez, Epitaxial graphene, Solid State Communications 143 (2007)
92-100.

\bibitem{EHI}  A. El Soufi, E.~M. Harrell II, and S. Ilias,
Universal inequalities for the eigenvalues of Laplace and
Schr\"{o}dinger operators on submanifolds, Trans. Amer. Math. Soc.,
in press.

\bibitem{GeMaSj}
C. G\'erard, A Martinez, and J. Sj\"ostrand, A Mathematical approach
to the effective Hamiltonian in perturbed periodic problems, Commun.
Math. Phys. 142 (1991) 217-244.

\bibitem{Gie}  E. Giere, The fractional Laplacian in applications,
http://www.eckhard-giere.de/math/publications/review.pdf

\bibitem{GuRaTr}
J.C. Guillot, J. Ralston, E. Trubowitz. Semiclassical Methods in
Solid State Physics, Commun. Math. Phys. 116 (1988) 401-415.

\bibitem{Har93} E.~M. Harrell II, Some Geometric Bounds on Eigenvalue Gaps,
Commun. in Partial Diff. Eqs. 18 (1993) 179-198.

\bibitem{Har07} E.~M. Harrell II, Commutators, eigenvalue gaps, and mean curvature
in the theory of Schr\"odinger operators, Commun. in Partial Diff.
Eqs. 32 (2007) 401--413.

\bibitem{HarHer} E.~M. Harrell  II and L. Hermi, Differential inequalities for Riesz means and
Weyl-type bounds for eigenvalues, J. Funct. Analysis.  254 (2008)
3173-3191.

\bibitem{HarMic} E.~M. Harrell  II and P.~L. Michel, Commutator Bounds for Eigenvalues, with
Applications to Spectral Geometry, Commun. in Partial Diff. Eqs. 19
(1994) 2037--2055.  Erratum 20 (1995) 1453.

\bibitem{HarMic} E.~M. Harrell II and P.~L. Michel, Commutator Bounds
for Eigenvalues of Some Differential Operators, pp. 235--244 in G.
Ferreyra, G. Goldstein, and F. Neubrander, eds., Evolution
Equations.  New York:  Marcel Dekker (1994).

\bibitem{HarStu} E.~M. Harrell II and J. Stubbe, On trace identities
and universal eigenvalue estimates for some partial differential
operators,  Trans.\ Amer.\ Math.\ Soc.\  349 (1997) 1797--1809.

\bibitem{HarStu2} E.~M. Harrell II and J. Stubbe, Universal
bounds and semiclassical estimates for eigenvalues of abstract
Schr\"{o}dinger operators, 2008 preprint, arXiv:0808.1133.

\bibitem{Henrot} A. Henrot,
Extremum Problems for Eigenvalues of Elliptic Operators, Frontiers
in Mathematics, Birkh\"auser (2006).

\bibitem{HilPro}  G.~N. Hile and M.~H. Protter,
 Inequalities for eigenvalues of the Laplacian, Indiana
Univ.\ Math.\ J. 29 (1980) 523--538.

\bibitem{JacPi}
R. Jackiw and S.-Y. Pi,
 Chiral Gauge Theory for Graphene, Phys. Rev. Lett. 98 (2007) 266402.

\bibitem{LapWeidl} A. Laptev and T. Weidl, Recent Results on Lieb-Thirring inequalities,
Journ\'{e}es "\'{E}quations aux D\'{e}riv\'{e}es Partielles" (La
Chapelle sur Erdre, 2000), XX (2002) 14pp., Univ. Nantes, Nantes
(2000)

\bibitem{LevPro}
H.~A. Levine and M.~H. Protter, Unrestricted lower bounds for
eigenvalues for classes of elliptic equations and systems of
equations with applications to problems in elasticity, Math. Meth.
in the Appl. Sci. 7 (1985) 210-222.

\bibitem{LevPar} M Levitin  and L. Parnovski, Commutators, spectral
trace identities, and universal estimates for eigenvalues, J.\
Funct.\ Anal. 192 (2002) 425--445.

\bibitem{Li} P. Li, Eigenvalue estimates on homogeneous manifolds,
Comment. Math. Helvetici, 55 (1980) 347--363.

\bibitem{LiYau} P. Li and S.-T. Yau, On the Schr\"odinger equation and
the eigenvalue problem, Comm.\ Math.\ Phys.\ 88 (1983) 309--318.

\bibitem{LieLos}  E.~H. Lieb and M. Loss, Analysis, second edition.  Graduate Studies in
Mathematics {\bf 14}.  Providence:  Amer. Math. Soc. (2001).

\bibitem{Mic} P.~L.  Michel, Eigenvalue Gaps for Self-Adjoint Operators,
Georgia Institute of Technology PhD dissertation (1994)

\bibitem{MogZar}  A.~G. Moghaddam and M. Zareyan,
Josephson effect in mesoscopic graphene strips with finite width,
Phys. Rev. B 74 (2006) 241403(R).

\bibitem{NaeMei}
A. Naeemi and J.~D. Meindl, Electron Transport Modeling for
Junctions of Zigzag and Armchair Graphene Nanoribbons (GNRs)
Electron Device Letters, IEEE 29 (2008) 497-499

\bibitem{NFDD}  K. Nakada, M. Fujita, G. Dresselhaus, and M.~S. Dresselhaus, Edge state
in graphene ribbons:  Nanometer size effect and edge shape
dependence.  Phys. Rev. B, 54 17954-17961

\bibitem{Nen}
G. Nenciu,  Dynamics of band electrons in electric and magnetic
fields: rigorous justification of the effective Hamiltonians, Rev.
Mod. Phys. 63 (1991) 91-127.

\bibitem{NGMJKGDF}
K. S. Novoselov, A. K. Geim, S. V. Morozov, D. Jiang, M. I. Katsnelson, I. V. Grigorieva, S. V. Dubonos, and A. A. Firsov, Two-dimensional gas of massless Dirac fermions in graphene,
Nature 438 (2005) 197-200.

\bibitem{OdKe}  F. Odeh and J.B. Keller, Partial differential equations with periodic coefficients
 and Bloch waves in crystals, J. Math. Phys. 5 (1964) 1499-1504.

\bibitem{PPW} L.~E. Payne, G. P\'{o}lya, and
H. F. Weinberger, On the ratio of consecutive eigenvalues, J. Math.\
and Phys. 35 (1956) 289--298.

\bibitem{PSKY}  L.~A. Ponomarenko, F. Schedin, M. I. Katsnelson, R. Yang, E.~W. Hill,
K.~S. Novoselov, and A.~K. Geim, Chaotic Dirac Billiard in Graphene
Quantum Dots

\bibitem{Ran}  F. Rana, Graphene terahertz plasmon oscillators, IEEE Trans. Nanotech. 7 (2008) 91-99.

\bibitem{ReeSim} M. Reed and B. Simon,  Methods of Modern
Mathematical Physics, I, Functional Analysis, Academic Press, New
York (1972)

\bibitem{SadMarEtc}
M.~L. Sadowski, G. Martinez, and M. Potemski, C. Berger, and W.~A.
de Heer, Landau Level Spectroscopy of Ultrathin Graphite Layers
Phys. Rev. Lett. 97 (2006) 266405.

\bibitem{Sato} K. Sato, L\'{e}vy Processes and Infinitely Divisible
Distributions, Cambridge Studies in Advanced Mathematics 68,
Cambridge University Press, Cambridge (1999)

\bibitem{Sem}  G.~W. Semenoff, Condensed matter simulation of a three-dimensional anomaly.
Phys. Rev. Letters 53 (1984) 2449-2452.

\bibitem{Sim} B. Simon,  Functional Integration and Quantum
Physics, Academic Press, New York (1979)

\bibitem{Spa} I.~L. Spain, The electronic properties of graphite, pp. 1-150 in P.~L. Walker Jr.,
and P.A. Thrower, eds., Chemistry and Physics of Carbon,8, New York:
Marcel Dekker (1973)

\bibitem{Wal}  P.~R. Wallace, The band theory of graphite, Phys. REv. 71 (1947) 622-634.

\bibitem{WanXia}
Q. Wang, C. Xia, Universal bounds for eigenvalues of the biharmonic
operator on Riemannian manifolds. J. Funct. Analysis 245 (2007) 334-352.

\bibitem{Wes}
R.~M. Westervelt, Graphene Nanoelectronics, Science 320 (2008)
324-325.


\bibitem{Weyl} H. Weyl,  Das asymptotische Verteilungsgesetz
der Eigenwerte linearer partieller Differentialgleichungen, Math.
Ann. 71 (1911) 441--479.

\bibitem{WuCao}
F. E. Wu and L. F Cao, Estimate for eigenvalues of Laplacian
operator with any order, Science in China Series A: Mathematics 50
(2007) 1078-1086.

\bibitem{Yan} H.~C. Yang,  Estimates of the difference
between consecutive eigenvalues, 1995 preprint (revision of
International Centre for Theoretical Physics preprint IC/91/60,
Trieste, Italy, April 1991).

\bibitem{YanYau} P.~C. Yang and S.--T. Yau, Eigenvalues
of the Laplacian of a compact Riemann surfaces and minimal
submanifolds, Ann. Scuola Norm. Sup. Pisa, cl. sci.4 (1980) 55--63.

\end{thebibliography}
\end{document}